\newtheorem{theorem}{Theorem}[section]
\newtheorem{lemma}[theorem]{Lemma}
\newtheorem{proposition}[theorem]{Proposition}
\newtheorem{corollary}[theorem]{Corollary}
\theoremstyle{definition}
\newtheorem{definition}[theorem]{Definition}
\newtheorem{example}[theorem]{Example}
\newtheorem{remark}[theorem]{Remark}
\newcommand{\ep}{\varepsilon}
\newcommand{\N}{\mathbb{N}}
\newcommand{\inv}{{-1}}
\newcommand{\T}{\mathbb{T}}
\newcommand{\Z}{\mathbb{Z}}
\newcommand{\R}{\mathbb{R}}
\newcommand{\C}{\mathbb{C}}
\newcommand{\K}{\mathcal{K}}
\newcommand{\Cuntz}[1]{\mathcal{O}_{#1}}
\newcommand{\Aut}[1]{{\rm Aut}(#1)}
\newcommand{\Prim}[1]{{\rm Prim}(#1)}
\begin{document}

\title{Connective $C^*$-algebras}
\author{Marius Dadarlat}\address{MD: Department of Mathematics, Purdue University, West Lafayette, IN 47907, USA}\email{mdd@purdue.edu}
\author{Ulrich Pennig}\address{UP:
   School of Mathematics,  Cardiff University, Senghennydd Road,  Cardiff CF24 4AG, UK}\email{PennigU@cardiff.ac.uk}	

\begin{abstract}
 Connectivity is a homotopy invariant property of  separable $C^*$-algebras  which has three notable consequences: absence of nontrivial projections, quasidiagonality and a more geometric realization of KK-theory for nuclear $C^*$-algebras using asymptotic morphisms. The purpose of this paper is to further explore the class of connective $C^*$-algebras. We give new characterizations of connectivity for exact and for nuclear  separable $C^*$-algebras and show that an extension of connective separable nuclear $C^*$-algebras is connective. We establish connectivity or lack of connectivity for $C^*$-algebras associated to certain classes of groups: virtually abelian groups, linear connected  nilpotent Lie groups and linear connected semisimple Lie groups.
\end{abstract}
\thanks{M.D. was partially supported by NSF grant \#DMS--1362824}

\maketitle
\section{Introduction}
Connectivity of separable $C^*$-algebras was introduced in our earlier paper \cite{Dad-Pennig-homotopy-symm} under different terminology, see Definition~\ref{def:connectivity} below. The initial motivation for studying it stemmed from our search for homotopy-symmetric $C^*$-algebras.  By a result of Loring and the first author \cite{DadLor:unsusp}, these are precisely the separable $C^*$-algebras for which one can unsuspend in the E-theory of Connes and Higson \cite{Con-Hig:etheory}. Using a result of Thomsen \cite{Thomsen:discrete}, we proved in \cite{Dad-Pennig-homotopy-symm}  that connectivity is equivalent to homotopy-symmetry for all separable nuclear $C^*$-algebras. Moreover, we showed that connectivity has a number of important permanence properties. These facts allowed us to exhibit  new classes of homotopy-symmetric $C^*$-algebras.

 The purpose of this paper is to further explore the class of connective $C^*$-algebras. We are motivated by the following three properties they share:

  (i) If $A$ is a separable nuclear $C^*$-algebra, then $KK(A,B)$ is isomorphic to the homotopy classes of completely positive and contractive (cpc) asymptotic morphisms from $A$  to $B \otimes \K$ for any separable $C^*$-algebra $B$.

 (ii)  Connective $C^*$-algebras are quasidiagonal. In fact, if $A$ is connective, then $A\otimes B$ is quasidiagonal for any $C^*$-algebra $B$. ($A\otimes B$ denotes the  minimal tensor product.)

 (iii) Connective $C^*$-algebras do not have nonzero projections. In fact, if $A$ is connective, then $A\otimes B$ does not have nonzero projections for any $C^*$-algebra $B$.

Connectivity is of particular interest in the case of group $C^*$-algebras.
A countable discrete group $G$ is called connective if the augmentation ideal $I(G)$ defined as the kernel of the trivial representation $\iota \colon C^*(G)\to \C$ is a connective $C^*$-algebra. In view of properties (ii) and (iii) connectivity of $G$ may be viewed as a stringent topological property that accounts simultaneously for the quasidiagonality of $C^*(G)$ and the verification of the Kadison-Kaplansky conjecture for certain classes of groups. Examples of nonabelian connective groups were exhibited in \cite{Dad-Pennig-homotopy-symm} and \cite{Dad-Pennig-Schneider}.

In this paper we give new characterizations of connectivity for  exact and nuclear separable $C^*$-algebras, see Prop.~\ref{products:exact}, \ref{products}. We prove that connectivity of separable nuclear $C^*$-algebras is preserved under extensions, see Thm.~\ref{thm:extensions}. This is a key permanence property which does not hold for quasidiagonal $C^*$-algebras.

There is a close connection between the topology of the spectrum and connectivity, which we employ to reveal an obstruction to connectivity by using work of  Blackadar and Cuntz \cite{BlaCuntz:infproj} and Pasnicu and R\o{}rdam \cite{Pasnicu-Rordam}. In particular, we show that a countable discrete  group $G$ is not connective if the trivial representation $\iota$ is a shielded point of the unitary dual of $G$ in the sense of Def.~\ref{def:shielded}.

Motivated by this, we give a complete description of the neighborhood of $\iota$ in the spectrum of the Hantzsche-Wendt group $G$, which is a torsion free crystallographic group with holonomy $\Z/2\Z \times \Z/2\Z$, in Sec.~\ref{sec:spectrum-HW}. This allows us to prove that $G$ is not connective  in this case (Cor.~\ref{cor:HW_not_connective}).
 Moreover, we show that this group provides a counterexample to a conjecture from \cite{Dadarlat-almost}. Specifically, we prove that the natural map $[[I(G),\K]]\to K^0(I(G))$ is not an isomorphism (Lem.~\ref{lemma:reduction}). In contrast, we show that all torsion free crystallographic groups with \emph{cyclic holonomy} are connective (Thm.~\ref{thm:cyclic_holonomy}).

Next, we investigate connectivity for $C^*$-algebras associated to Lie groups. We show that all noncompact linear connected nilpotent Lie groups have connective $C^*$-algebras (Thm.~\ref{thm:nilpotent}).
Using classic results from representation theory in conjunction with permanence properties of connectivity, we show that  if $G$ is a linear connected  complex semisimple Lie group, then $C^*_r(G)$ is connective if and only if $G$ is not compact (Thm.~\ref{thm:complexLie}). Moreover, if $G$ is a linear connected real reductive Lie group, then $C^*_r(G)$ is connective if and only if $G$ does not have a compact Cartan subgroup (Thm.~\ref{thm:real_reductive}).

A common denominator of our results concerning group $C^*$-algebras is that in all the cases we analyzed, $C^*(G)$  contains a large connective ideal.

\section{Connective $C^*$-algebras}
\subsection{Definitions and background}
For a $C^*$-algebra $A$, the \emph{cone over} $A$ is defined as $CA = C_0[0,1) \otimes A$
the \emph{suspension of} $A$ as $SA= C_0(0,1) \otimes A$.

The first of the following two notions was introduced in \cite[Def.\ 2.6 (i)]{Dad-Pennig-homotopy-symm}, under a different terminology which we have abandoned.  We use the abbreviation \emph{cpc map} for a completely positive and contractive  map.
\begin{definition}\label{def:connectivity}
Let $A$ be a $C^*$-algebra.\\
(a)  $A$ is \emph{connective} if there is a  $*$-monomorphism $$\Phi\colon A\to\prod_n CL(\mathcal{H})/\bigoplus_n CL(\mathcal{H})$$ which is liftable to a cpc map $\varphi\colon A\to\prod_n CL(\mathcal{H})$.\\
(b) $A$ is  \emph{almost connective}, if there is a (not necessarily liftable) $*$-mono\-mor\-phism $\Phi\colon A\to\prod_n CL(\mathcal{H})/\bigoplus_n CL(\mathcal{H})$.
\end{definition}
For a discrete  group $G$, we define $I(G)$ to be the augmentation ideal, i.e.\ the kernel of the trivial representation $C^*(G) \to \C$.
We will sometimes say that a discrete amenable group $G$ is connective if the $C^*$-algebra $I(G)$ is connective. Note that (almost) connective $C^*$-algebras do not have nonzero projections. Thus any connective $C^*$-algebra is nonunital. Our definition allows that the zero $C^*$-algebra $\{0\}$ is connective.
	
Let $A$ and $B$ be $C^*$-algebras. An \emph{asymptotic morphism}  is a family of maps $\{\varphi_t \colon A \to B\}_{t \in [0,\infty)}$ such that
\begin{enumerate}[a)]
	\item for each $a \in A$ the map $t \mapsto \varphi_t(a)$ is norm-continuous and bounded,
	\item for all $a,b \in A$ and $\lambda \in \C$, we have
	\begin{align*}
		\lim_{t \to \infty} \lVert \varphi_t(a + \lambda\,b) - (\varphi_t(a) + \lambda\,\varphi_t(b))\rVert & = 0 \\	
		\lim_{t \to \infty} \lVert \varphi_t(ab) - \varphi_t(a)\,\varphi_t(b)\rVert & = 0 \\	
		\lim_{t \to \infty} \lVert \varphi_t(a^*) - \varphi_t(a)^*\rVert & = 0 \ .
	\end{align*}
\end{enumerate}
A \emph{discrete asymptotic morphism} $(\varphi_n)_{n \in \N}$ between $A$ and $B$ is a family of maps $\varphi_n \colon A \to B$ that satisfies the analogous conditions as a) and b) above with the index set replaced by $\N$. A homotopy between two (discrete) asymptotic morphisms $(\varphi^0_t)_{t \in I}$ and $(\varphi^1_t)_{t \in I}$ is a (discrete) asymptotic morphism $H_t \colon A \to C[0,1] \otimes B$, such that $\text{ev}_i \circ H_t = \varphi^i_t$ for all $t \in I$, where $I$ either denotes $[0,\infty)$ or $\N$. We will say that a (discrete) asymptotic  morphism $(\varphi_t)_{t \in I}$ is completely positive and contractive (cpc) if each of the maps $\varphi_t$ is cpc. The corresponding homotopy classes will be denoted as follows:
\begin{itemize}
	\item $[[A, B]]$ -- homotopy classes of asymptotic morphisms,
	\item $[[A, B]]_{\N}$ -- homotopy classes of discrete asymptotic morphisms,
	\item $[[A, B]]_{\N}^{cp}$ -- homotopy classes of discrete cpc  asymp.\ morphisms
\end{itemize}

\subsection{Characterizations of connectivity}
In the following we give two more characterizations of connectivity for exact and respectively nuclear $C^*$-algebras.

\begin{proposition}\label{products:exact}
  Let $A$ be a separable exact $C^*$-algebra. Then $A$ is connective if and only if there is an injective $*$-homomorphism
  $\pi\colon A \to \Cuntz{2}$ which is null-homotopic as a discrete cpc asymptotic morphism. This means that $[[\pi]]=0$ in the set $[[A,\Cuntz{2}]]_{\N}^{cp}$.
\end{proposition}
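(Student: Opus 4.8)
The plan is to reformulate the null-homotopy condition as a lifting problem into the cone $C\Cuntz{2}=C_0[0,1)\otimes\Cuntz{2}$ and then play the two directions off against this common object. First I record the reformulation: unravelling the definition of homotopy (with $C[0,1]\otimes\Cuntz{2}$ as target), a $*$-homomorphism $\pi\colon A\to\Cuntz{2}$ satisfies $[[\pi]]=0$ in $[[A,\Cuntz{2}]]_{\N}^{cp}$ if and only if there is a discrete cpc asymptotic morphism $H=(H_n)\colon A\to C\Cuntz{2}$ with $\mathrm{ev}_0\circ H_n=\pi$ for all $n$; indeed a null-homotopy is exactly a discrete cpc asymptotic morphism into $C[0,1]\otimes\Cuntz{2}$ whose evaluation at $1$ vanishes (forcing values in $C_0[0,1)\otimes\Cuntz{2}$) and whose evaluation at $0$ is $\pi$.

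Granting this, the reverse implication is immediate. Fix a faithful representation $\Cuntz{2}\subseteq L(\mathcal{H})$, so that $C\Cuntz{2}\subseteq CL(\mathcal{H})$, and view the $H_n$ above as cpc maps into $CL(\mathcal{H})$. They are asymptotically multiplicative, so the induced map $\Phi\colon A\to\prod_n CL(\mathcal{H})/\bigoplus_n CL(\mathcal{H})$ is a $*$-homomorphism with cpc lift $(H_n)$. Since evaluation at $0$ is contractive, $\|H_n(a)\|\ge\|\mathrm{ev}_0 H_n(a)\|=\|\pi(a)\|=\|a\|$ because $\pi$ is an injective $*$-homomorphism; hence $\Phi$ is isometric, in particular injective, and $A$ is connective.

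For the forward implication, assume $A$ connective, so there are cpc maps $\varphi_n\colon A\to CL(\mathcal{H})$ that are asymptotically multiplicative and asymptotically isometric, $\limsup_n\|\varphi_n(a)\|=\|a\|$. Since $A$ is separable and exact, Kirchberg's embedding theorem supplies an injective (nuclear) $*$-homomorphism $\pi\colon A\to\Cuntz{2}$; I will show $[[\pi]]=0$ by producing $H_n$ as above. The heart of the argument is a transfer step: replacing the cone $CL(\mathcal{H})$ over all bounded operators by the cone $C\Cuntz{2}$. Here exactness of $A$ is essential --- a $C^*$-subalgebra of $L(\mathcal{H})$ need not be exact, so one cannot simply pass to a separable subalgebra and quote Kirchberg; instead I would use the nuclear-embedding characterization of exactness together with the $\Cuntz{2}$-absorption $\Cuntz{2}\cong\Cuntz{2}\otimes\Cuntz{2}$ to convert the $L(\mathcal{H})$-valued connectivity data into an asymptotically isometric, asymptotically multiplicative cpc sequence $G_n\colon A\to C\Cuntz{2}$. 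The cone coordinate of $G_n$ then furnishes, at the level of $\Cuntz{2}$, a null-homotopy of the discrete asymptotic morphism $\psi_n:=\mathrm{ev}_0\circ G_n$. Finally, to arrange the base map to be the chosen embedding $\pi$, I would invoke the Kirchberg--Phillips uniqueness theorem: any two injective $*$-homomorphisms $A\to\Cuntz{2}$ are approximately unitarily equivalent, hence homotopic, so $[[\pi]]=[[\psi_\bullet]]=0$, and prepending such a homotopy to the cone of $G_n$ yields the required $H_n$ with $\mathrm{ev}_0 H_n=\pi$.

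The main obstacle is precisely the transfer step, i.e.\ showing that for exact $A$ the definition of connectivity may be realized with $\Cuntz{2}$ in place of $L(\mathcal{H})$, while keeping the maps cpc, liftable, and asymptotically isometric; making this compatible with a fixed injective $*$-homomorphism into $\Cuntz{2}$ (rather than merely an asymptotic embedding at the base point) is the delicate point, and it is where exactness and the self-absorbing structure of $\Cuntz{2}$ genuinely enter.
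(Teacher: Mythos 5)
Your reverse implication (null-homotopy $\Rightarrow$ connectivity) is correct and coincides with the paper's argument: the null-homotopy takes values in $\ker(\mathrm{ev}_1)=C_0[0,1)\otimes\Cuntz{2}\subset CL(\mathcal{H})$, and the fixed base point $\pi$ forces the induced map $\Phi\colon A\to\prod_n CL(\mathcal{H})/\bigoplus_n CL(\mathcal{H})$ to be isometric, with the cpc lift $(H_n)_n$ supplied by the homotopy itself.

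The forward implication, however, has a genuine gap, and you have located it yourself: the ``transfer step'' converting the $CL(\mathcal{H})$-valued connectivity data into cpc, asymptotically multiplicative, asymptotically isometric maps $G_n\colon A\to C\Cuntz{2}$ is only announced (via ``the nuclear-embedding characterization of exactness'' and $\Cuntz{2}\cong\Cuntz{2}\otimes\Cuntz{2}$), never carried out, and no direct construction of this kind is apparent: there is no cpc map from $CL(\mathcal{H})$ to $C\Cuntz{2}$ to compose with, the separable subalgebras of $CL(\mathcal{H})$ generated by the images $\varphi_n(A)$ need not be exact, and Arveson extension goes \emph{into} $L(\mathcal{H})$, not out of it. The paper's proof avoids this problem entirely, and this is precisely where connectivity is used: by \cite[Prop.~2.11]{Dad-Pennig-homotopy-symm}, connectivity of $A$ makes $[[A,\Cuntz{2}\otimes\K]]_{\N}^{cp}$ an abelian \emph{group}; Kirchberg's embedding theorem gives an injective $*$-homomorphism $\pi\colon A\to\Cuntz{2}\otimes\K$; since $\pi\oplus\pi$ is unitarily homotopy equivalent to $\pi$, one gets $[[\pi]]+[[\pi]]=[[\pi]]$, hence $[[\pi]]=0$ by cancellation in the group; finally $\Cuntz{2}\otimes\K$ is embedded into $\Cuntz{2}$. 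In other words, the null-homotopy is not assembled by hand from the connectivity data --- it is produced abstractly from the fact that an idempotent element of a group is the identity. A secondary defect in your plan is the appeal to Kirchberg--Phillips uniqueness for embeddings $A\to\Cuntz{2}$: a connective $A$ has no nonzero projections, so it is never unital, and approximate unitary equivalence of arbitrary (non-unital) embeddings into $\Cuntz{2}$ is not available in the form you quote; the paper sidesteps base-point matching altogether, since it is the class of the Kirchberg embedding itself that is shown to vanish.
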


\begin{proof} $(\Rightarrow)$ 
By assumption, there is a cpc discrete asymptotic morphism $\{\varphi_n:A\to C[0,1]\otimes \Cuntz{2}\}_n$ such that
$\varphi_n^{(0)}=\pi$ is an injective $*$-homomorphism and $\varphi_n^{(1)}=0$. Thus, we can view $\{\varphi_n\}_n$ as an injective discrete asymptotic morphism $\{\varphi_n:A\to C_0[0,1)\otimes \Cuntz{2}\subset CL(\mathcal{H})\}_n$ and hence $A$ is connective.

$(\Leftarrow)$ Suppose that $A$ is a separable exact connective $C^*$-algebra.
By \cite[Prop.~2.11]{Dad-Pennig-homotopy-symm} it follows that $[[A,\Cuntz{2}\otimes \K]]_{\N}^{cp}$ is an abelian group. By Kirchberg's embedding theorem, there is an injective $*$-homomorphism $\pi\colon A \to \Cuntz{2}\otimes \K$.
Moreover $\pi \oplus \pi \colon A \to \Cuntz{2}\otimes \K$ is unitarily homotopy equivalent to $\pi$.
It follows that $[[\pi]]\oplus [[\pi]]=[[\pi]]$ in the group $[[A,\Cuntz{2}\otimes \K]]_{\N}^{cp}$
and hence $[[\pi]]=0$. After embedding $\Cuntz{2}\otimes \K$ into $\Cuntz{2}$ we obtain the desired conclusion.
\end{proof}

\begin{proposition}\label{products} Let $A$ be a separable nuclear $C^*$-algebra.
The following properties are equivalent.
\begin{enumerate}[(i)]
	\item $A$ is connective.
	\item $A\otimes B$ is connective for some $C^*$-algebra $B$ that contains a nonzero projection.
	\item $A\otimes B$ is connective for all $C^*$-algebras $B$
	\item $[[A,\Cuntz{2}\otimes \K]]=0$.
	\item $[[A,L(\mathcal{H})\otimes \K]]=0$.
\end{enumerate}
\end{proposition}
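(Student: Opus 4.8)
The plan is to run a short cycle for the three ``internal'' conditions (i)--(iii) and then tie in the homotopy-theoretic conditions (iv), (v) through Prop.~\ref{products:exact} and the identification of connectivity with homotopy-symmetry for nuclear algebras. First I would dispose of the easy directions. For (iii)$\Rightarrow$(i) take $B=\C$. For (ii)$\Rightarrow$(i), if $p\in B$ is a nonzero projection then $a\mapsto a\otimes p$ is an injective $*$-homomorphism identifying $A$ with the $C^*$-subalgebra $A\otimes\C p$ of $A\otimes B$; since connectivity passes to $C^*$-subalgebras (restrict both the monomorphism $\Phi$ and its cpc lift $\varphi$ of Definition~\ref{def:connectivity}), the connectivity of $A\otimes B$ forces that of $A$. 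Finally (iii)$\Rightarrow$(ii) is immediate once one names a single $B$ with a nonzero projection, e.g.\ $B=\C$.

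The substantive internal direction is (i)$\Rightarrow$(iii), which I would prove directly by tensoring the connective structure of $A$ with $\mathrm{id}_B$. Fixing a faithful representation $B\subseteq L(\mathcal{H}_B)$ identifies $CL(\mathcal{H})\otimes B=C_0[0,1)\otimes L(\mathcal{H})\otimes B$ with a $C^*$-subalgebra of $CL(\mathcal{H}')$ for $\mathcal{H}'=\mathcal{H}\otimes\mathcal{H}_B$. Composing $\varphi\otimes\mathrm{id}_B$ with the canonical $*$-homomorphism $\big(\prod_n CL(\mathcal{H})\big)\otimes B\to\prod_n\big(CL(\mathcal{H})\otimes B\big)$ yields a cpc map $A\otimes B\to\prod_n CL(\mathcal{H}')$ that is asymptotically multiplicative, hence descends to a $*$-homomorphism modulo $\bigoplus_n CL(\mathcal{H}')$. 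The only real point is injectivity of this induced $*$-homomorphism, and this is where I would invoke that $A$ is nuclear (hence exact), so that tensoring the injection $\Phi$ with $\mathrm{id}_B$ over the minimal tensor product preserves injectivity. I emphasize that (iii) must be handled in this direct way, since for non-nuclear $B$ the algebra $A\otimes B$ need not be nuclear and the characterizations (iv), (v) are unavailable for it.

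To bring in (iv) and (v) I would establish (i)$\Leftrightarrow$(iv) and (i)$\Leftrightarrow$(v). For the forward implications, connectivity of the nuclear $A$ is equivalent to homotopy-symmetry, so by the unsuspension theorem of Dadarlat--Loring one has $[[A,D\otimes\K]]\cong KK(A,D)$ for separable $D$; taking $D=\Cuntz{2}$ gives (iv) because $\Cuntz{2}$ is $KK$-trivial. Since $L(\mathcal{H})$ is not separable I would instead derive (v) from connectivity directly: it supplies a faithful, null-homotopic cpc asymptotic morphism $\rho$ into $L(\mathcal{H})$, and an Eilenberg swindle in the stable, properly infinite target together with absorption of an arbitrary $\alpha$ by the infinite ampliation $\rho^{\infty}$ gives $\alpha\sim\alpha\oplus\rho^{\infty}\sim\rho^{\infty}\sim 0$. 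For the converses: (iv)$\Rightarrow$(i) uses Kirchberg's embedding $\pi\colon A\hookrightarrow\Cuntz{2}$, which (iv) renders null; transporting along a fixed $\Cuntz{2}\otimes\K\hookrightarrow\Cuntz{2}$ and passing to cpc discrete representatives (legitimate for nuclear $A$) gives $[[\pi]]=0$ in $[[A,\Cuntz{2}]]_{\N}^{cp}$, whence Prop.~\ref{products:exact} applies. And (v)$\Rightarrow$(i): a faithful representation, read as a constant asymptotic morphism into $L(\mathcal{H})\subseteq L(\mathcal{H})\otimes\K$, is null-homotopic by (v), and a null-homotopy with endpoint exactly $0$ takes values in the cone $CL(\mathcal{H}')$ while staying faithful at the $0$-end, which is precisely a connective structure.

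I expect the main obstacle to lie in the two ``upward'' implications (i)$\Rightarrow$(iii) and (i)$\Rightarrow$(v). In (i)$\Rightarrow$(iii) the delicate point is that injectivity of the induced $*$-homomorphism on $A\otimes B$ must survive passage through the product/quotient construction for a completely arbitrary $B$, with the non-exact algebra $L(\mathcal{H})$ sitting in the target; this forces genuine use of exactness of $A$ and of the injectivity of the minimal tensor product, rather than a formal manipulation. In (i)$\Rightarrow$(v) the analogous difficulty is making the swindle truly continuous and justifying the absorption $\alpha\oplus\rho^{\infty}\sim\rho^{\infty}$ inside $L(\mathcal{H})\otimes\K$; this is the step that actually requires connectivity, since an unconditional swindle would falsely trivialize every class, so the null-homotopy of $\rho$ must be fed in at exactly that point.
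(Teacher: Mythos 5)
Your treatment of (i)$\Rightarrow$(v) contains a genuine gap: the chain $\alpha\sim\alpha\oplus\rho^{\infty}\sim\rho^{\infty}\sim 0$ is circular at the middle step. Granting $\rho^{\infty}\sim 0$ (which does follow from the null-homotopy of $\rho$), the absorption $\alpha\oplus\rho^{\infty}\sim\rho^{\infty}$ is literally equivalent to the conclusion $\alpha\sim 0$, i.e.\ it \emph{is} the statement being proved, and you offer no mechanism for it. Voiculescu-type absorption theorems go in the opposite direction: the null/trivial element is the one that gets absorbed, giving $\alpha\oplus\rho^{\infty}\sim\alpha$; an unconditional statement that $\rho^{\infty}$ absorbs every $\alpha$ would, in Ext-language, say that every extension is stably trivial. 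There is no way to ``feed in'' the null-homotopy of $\rho$ at that point: what connectivity actually buys (via Thomsen's theorem) is that $[[A,B\otimes\K]]$ is a \emph{group}, and the vanishing of that group comes from the unsuspension isomorphism $[[A,B\otimes\K]]\cong KK(A,B)$, that is, from Thm.~3.1 of \cite{Dad-Pennig-homotopy-symm}. There are two further technical defects: the infinite ampliation $\rho^{\infty}$ is an infinite diagonal repeat, so it takes values in $L(\mathcal{H})\otimes L(\ell^2)$ but \emph{not} in $L(\mathcal{H})\otimes\K$, hence it is not even an admissible element of $[[A,L(\mathcal{H})\otimes\K]]$; and connectivity hands you a discrete asymptotic morphism, whereas (v) concerns continuous-parameter ones. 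The paper's proof of (i)$\Rightarrow$(v) is exactly the route you set aside: apply Thm.~3.1 with $B=L(\mathcal{H})$, noting $KK(A,L(\mathcal{H}))=0$, and handle the non-separability of $L(\mathcal{H})$ (your stated reason for avoiding this) by the remark that separability of $A$ makes the extension of Thm.~3.1 to non-separable targets routine.

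The rest of your proposal is essentially sound. The easy directions and the implications (iv)$\Rightarrow$(i), (v)$\Rightarrow$(i) are close to the paper's argument; the paper builds the connective structure directly from the null-homotopy and obtains the cpc lift of $\Phi$ from nuclearity (Choi--Effros), a step you should also make explicit in (v)$\Rightarrow$(i), since the null-homotopy itself need not be cpc; your detour for (iv)$\Rightarrow$(i) through Prop.~\ref{products:exact} is a legitimate variant for nuclear $A$, granted the standard fact that for nuclear domains one may pass to cpc representatives and cpc homotopies. Your direct proof of (i)$\Rightarrow$(iii) is a genuine alternative to the paper, which simply cites \cite{Dad-Pennig-homotopy-symm}; it does work, but note that exactness of $A$ is not what makes it run: injectivity of $\Phi\otimes\mathrm{id}_B$ on the minimal tensor product is automatic for any injective $*$-homomorphism $\Phi$, and the point actually needed is the canonical identification of $(\bigoplus_n CL(\mathcal{H}))\otimes B$ with $\bigoplus_n(CL(\mathcal{H})\otimes B)$ inside $\prod_n(CL(\mathcal{H})\otimes B)$, which holds for arbitrary $B$.
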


\begin{proof} The equivalences $(i) \Leftrightarrow (ii)  \Leftrightarrow  (iii)$ were established in \cite{Dad-Pennig-homotopy-symm}. (For $(ii) \Rightarrow (i)$ observe that $A$ is a subalgebra of $A \otimes B$ if $B$ contains a nonzero projection.)

$(i) \Rightarrow (iv)$ and $(i) \Rightarrow (v)$.
Let $B$ be a $\sigma$-unital $C^*$-algebra such that $KK(A,B)=0$, for instance $B=\Cuntz{2}$ or $B=L(\mathcal{H})$.
If $A$ is connective, then $A$ is homotopy symmetric and hence $[[A,B\otimes \K]]\cong KK(A,B)=0$ by \cite[Thm.~3.1]{Dad-Pennig-homotopy-symm}. Note that even though \cite[Thm.~3.1]{Dad-Pennig-homotopy-symm} was stated for separable $C^*$-algebras $B$ it is routine to extend the result to general $C^*$-algebras using the separability of $A$.

$(iv) \Rightarrow (i)$ and $(v) \Rightarrow (i)$.
Fix an embedding $\pi \colon A \to \Cuntz{2} \subset L(\mathcal{H})\otimes \K$ and regard it as a constant asymptotic morphism $\{\pi_t \colon A \to L(\mathcal{H})\otimes \K\}_t$.
By assumption, $[[\pi_t]]=0$ in $[[A,L(\mathcal{H})\otimes \K]]$ and hence by restriction,
$[[\pi_n]]=0$ in $[[A,L(\mathcal{H})\otimes \K]]_{\N}$.  We shall
view $L(\mathcal{H})\otimes \K$ as a subalgebra of $L(\mathcal{H})$.
The corresponding homotopy from the constant discrete morphism $\{\pi_n\}_n$ to zero
will induce an embedding $\Phi\colon A\to\prod_n CL(\mathcal{H})/\bigoplus_n CL(\mathcal{H})$ which is liftable to a cpc map $\varphi\colon A\to\prod_n CL(\mathcal{H})$ by the nuclearity of $A$.
\end{proof}

\subsection{Extensions of connective $C^*$-algebras}
Connectivity of $C^*$-algebras has a plethora of permanence properties as proven in \cite[Thm.\ 3.3]{Dad-Pennig-homotopy-symm}. In particular, it is inherited by split extensions \cite[Thm.~3.3~(d)]{Dad-Pennig-homotopy-symm}. In the following theorem this result is extended to non-split extensions as well.

\begin{theorem}\label{thm:extensions} Let $0\to J \to A \to B \to 0$ be an exact sequence of separable nuclear $C^*$-algebras.
If $J$ and $B$ are connective, then $A$ is connective.
\end{theorem}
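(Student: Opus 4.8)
The plan is to recast connectivity as the vanishing of a single homotopy set and then apply the half-exactness of asymptotic morphism theory to the given extension. By Proposition~\ref{products}(v), a separable nuclear $C^*$-algebra $C$ is connective precisely when $[[C,D]]=0$ for $D:=L(\mathcal{H})\otimes\K$, which is a stable $C^*$-algebra. Applying this criterion to the connective algebras $J$ and $B$ gives $[[J,D]]=0$ and $[[B,D]]=0$, and, again by the same criterion, it suffices to prove $[[A,D]]=0$.

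The key structural input is that the extension is semisplit. Since $B$ is nuclear, the Choi--Effros lifting theorem yields a cpc splitting $s\colon B\to A$ of the quotient map $q\colon A\to B$. This puts us in the setting where the Connes--Higson machinery \cite{Con-Hig:etheory} applies and, for the fixed stable target $D$, produces an exact sequence of pointed sets
\[
  [[B,D]]\xrightarrow{\,q^{*}\,}[[A,D]]\xrightarrow{\,\rho\,}[[J,D]],
\]
exact at the middle term, where $q^{*}$ is precomposition with $q$ and $\rho$ is restriction of an asymptotic morphism along the inclusion $J\hookrightarrow A$. Granting this sequence, the theorem follows at once: $[[J,D]]=0$ makes $\ker\rho$ all of $[[A,D]]$, while $\mathrm{im}\,q^{*}$ is the single basepoint because $[[B,D]]=0$; exactness then forces $[[A,D]]=0$, so $A$ is connective by Proposition~\ref{products}.

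I expect the exactness at the middle term to be the one genuinely nontrivial point. The inclusion $\mathrm{im}\,q^{*}\subseteq\ker\rho$ is formal, since an asymptotic morphism of the form $\psi\circ q$ vanishes on $J$. The reverse inclusion is the heart of the matter: given an asymptotic morphism from $A$ to $D$ whose restriction to $J$ is null-homotopic, one must deform it to an asymptotic morphism that factors through $q$ up to homotopy. This is exactly excision/half-exactness for the semisplit extension, and concretely it is carried out using the splitting $s$ together with a quasicentral approximate unit $(u_\lambda)$ of $J$ in $A$ to interpolate between the part supported on $J$ and the part pulled back from $B$; the cone direction built into Definition~\ref{def:connectivity} is what absorbs this interpolating homotopy. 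One may package the same content through the mapping cone $C_q$ of $q$, for which the inclusion $J\hookrightarrow C_q$ is an $E$-equivalence when the extension is semisplit, identifying $[[C_q,D]]$ with $[[J,D]]$. Either way, the entire weight of the proof rests on having half-exactness available for the unsuspended sets $[[\,\cdot\,,D]]$, with nuclearity of $B$ supplying the cpc splitting that such excision requires.
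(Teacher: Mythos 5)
Your reduction to showing $[[A,D]]=0$ for $D=L(\mathcal{H})\otimes\K$ via Proposition~\ref{products}(v) is fine, as are the vanishing statements $[[J,D]]=[[B,D]]=0$ and the easy inclusion $\mathrm{im}\,q^{*}\subseteq\ker\rho$. The gap is precisely the step you flag as the heart of the matter: middle exactness of the \emph{unsuspended} sequence $[[B,D]]\to[[A,D]]\to[[J,D]]$ is not an available fact, and neither of your proposed justifications delivers it. The Connes--Higson construction \cite{Con-Hig:etheory} with a quasicentral approximate unit produces an asymptotic morphism out of the suspension $SB$, not out of $B$; the suspension variable is exactly what makes the interpolation possible, and accordingly every excision/half-exactness statement in that machinery concerns the suspended, stabilized groups (E-theory), not the sets $[[\,\cdot\,,D]]$. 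As for the mapping-cone route: an E-equivalence $J\hookrightarrow C_q$ identifies $E(C_q,L(\mathcal{H}))$ with $E(J,L(\mathcal{H}))$, but it does \emph{not} identify $[[C_q,D]]$ with $[[J,D]]$; by \cite{DadLor:unsusp} and \cite[Thm.~3.1]{Dad-Pennig-homotopy-symm}, passing between unsuspended sets and E-groups requires homotopy symmetry, i.e.\ connectivity, of the algebra in the first variable. Since $C_q$ sits in the extension $0\to J\to C_q\to CB\to 0$ with $CB$ contractible (hence connective), the assertion ``$C_q$ is connective'' is itself an instance of the theorem being proved, so this route is circular at its only nontrivial point. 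More bluntly: for nuclear separable $X$, vanishing of $[[X,D]]$ is \emph{equivalent} to connectivity of $X$, so your claimed exactness at $[[A,D]]$ is essentially a restatement of the theorem; it cannot be quoted as external ``excision'' input, and no unsuspended excision theorem of this kind exists in the literature.

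The paper sidesteps excision entirely, and its trick is the idea your outline is missing. One first tensors the extension with $\Cuntz{2}\otimes\K$ (harmless, since connectivity passes to nuclear subalgebras), then adds a trivial absorbing extension to obtain an absorbing extension $0\to J\otimes\Cuntz{2}\otimes\K\to E\to B\otimes\Cuntz{2}\otimes\K\to 0$ with $A\subset E$ \cite{BroDad:qdext}. Because $\Cuntz{2}$ is KK-contractible, $\mathrm{Ext}(B\otimes\Cuntz{2},J\otimes\Cuntz{2})=0$, and an absorbing extension with trivial Ext class actually splits by \cite{paper:KasparovKFunc}. This reduces everything to the split case, where preservation of connectivity is already known \cite[Thm.~3.3]{Dad-Pennig-homotopy-symm}, and $A$ inherits connectivity as a subalgebra of $E$. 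In other words, instead of trying to prove that every class vanishing on $J$ factors through $B$ (unsuspended half-exactness), one changes the extension within its stabilized, $\Cuntz{2}$-absorbed Ext class so that a genuine $*$-homomorphism splitting exists; that replacement is the actual content of the proof.
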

\begin{proof} Since  connectivity passes to nuclear subalgebras we may replace the given extension by
\[0\to J\otimes \Cuntz{2} \otimes \K \to A \otimes \Cuntz{2} \otimes \K \to B\otimes \Cuntz{2} \otimes \K \to 0.\]
Adding to this extension a trivial absorbing extension, using the addition in $\mathrm{Ext}$-theory,
we obtain an absorbing extension
\begin{equation}\label{eq:split}0\to J\otimes \Cuntz{2} \otimes \K \to E \to B\otimes \Cuntz{2} \otimes \K \to 0,\end{equation}
which by construction has the property that $A \subset  A \otimes \Cuntz{2} \otimes \K \subset E $, see  for instance \cite[Lemma 2.2]{BroDad:qdext}.
Since $Ext(B\otimes \Cuntz{2}, J\otimes \Cuntz{2})=0$ as $\Cuntz{2}$ is KK-contractible and we are dealing with an absorbing extension, it follows that the extension ~\eqref{eq:split} splits by \cite[Sec.~7]{paper:KasparovKFunc} and so
$E$ is connective by \cite[Thm. 3.3]{Dad-Pennig-homotopy-symm}. We conclude that $A\subset E$ is connective.
\end{proof}
In the sequel we will need to use the following result from \cite{Dad-Pennig-homotopy-symm}, which is based on \cite{Blanchard:subtriviality}.
\begin{theorem}[\cite{Dad-Pennig-homotopy-symm}, Cor.~3.4.]\label{thm:fields}
 Let $A$ be a separable continuous field of nuclear $C^*$-algebras over a compact connected metrizable space $X$.
If one of the fibers of $A$ is connective, then $A$ is connective.
\end{theorem}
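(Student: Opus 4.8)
The plan is to build the connective structure of Definition~\ref{def:connectivity} directly, in the concrete model where the target cone is $C\Cuntz{2}=C_0[0,1)\otimes\Cuntz{2}\subset CL(\mathcal H)$ (fixing an embedding $\Cuntz{2}\subset L(\mathcal H)$). Thus I aim to produce a cpc discrete asymptotic morphism $\Phi=(\Phi_n)\colon A\to C_0[0,1)\otimes\Cuntz{2}$ that is injective modulo $\bigoplus_n C_0[0,1)\otimes\Cuntz{2}$: such a family induces a $*$-monomorphism into $\prod_n CL(\mathcal H)/\bigoplus_n CL(\mathcal H)$ that is automatically lifted by the cpc map $(\Phi_n)$, which is exactly connectivity. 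Equivalently, by Proposition~\ref{products:exact} (which applies since $A$ is separable nuclear, hence exact), it suffices to exhibit an injective embedding $A\to\Cuntz{2}$ with vanishing class in $[[A,\Cuntz{2}]]_{\N}^{cp}$. The cone parameter $s\in[0,1)$ will serve as a homotopy variable that decays every element to $0$ as $s\to 1$, while injectivity is detected near $s=0$.

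First I would coordinatize the field. Viewing $A$ as a $C(X)$-algebra and invoking Blanchard's subtriviality theorem \cite{Blanchard:subtriviality}, I embed $A$ as a $C(X)$-subalgebra of the trivial field $C(X)\otimes\Cuntz{2}$. This realizes each section $a\in A$ as a norm-continuous function $x\mapsto a(x)\in\Cuntz{2}$, where $a(x)$ is the image of $a$ in the fiber $A_x$, and provides compatible fiberwise embeddings $A_x\hookrightarrow\Cuntz{2}$ together with a global faithful embedding $A\hookrightarrow C(X,\Cuntz{2})$.

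Next I would feed in the single connective fiber. The fiber $A_{x_0}$ is separable and nuclear, so by Proposition~\ref{products:exact} its connectivity supplies a cpc discrete asymptotic morphism $g=(g_n)\colon A_{x_0}\to C_0[0,1)\otimes\Cuntz{2}$ that is injective modulo $\bigoplus_n C_0[0,1)\otimes\Cuntz{2}$ and whose cone end ($s\to1$) is $0$. Precomposing with the evaluation $\mathrm{ev}_{x_0}\colon A\to A_{x_0}$ yields a null-homotopic cpc asymptotic morphism on all of $A$, but one that is injective only on the quotient $A_{x_0}$ and therefore annihilates the ideal of sections vanishing at $x_0$. The heart of the argument is to repair this loss of injectivity using connectedness of $X$: I would transport the fiber datum at $x_0$ across the base, converting the spatial variation $x\mapsto a(x)$ recorded by the subtrivial embedding into additional cone/homotopy parameters, so that no fiber is forgotten. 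Because $X$ is connected, every fiber is linked to $x_0$ through the field, while the cone direction in the target absorbs these transitions and still terminates at $0$. This is precisely where Blanchard's subtriviality is used: it lets the transport be carried out continuously, and asymptotically multiplicatively, over all of $X$ simultaneously, so that the non-connective fibers cause no obstruction — their embeddings need not be individually null-homotopic, since each is slid toward $x_0$, where the null-homotopy lives.

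The main obstacle is exactly this last gluing step. A compact connected metrizable $X$ need not be path-connected, so one cannot literally transport along paths from $x$ to $x_0$; the construction must instead be approximate, replacing paths by chains of nearby points and exploiting that the defining identities of an asymptotic morphism are required only in the limit $n\to\infty$. The technical burden is to verify that the resulting globally defined family is simultaneously (a) cpc, (b) asymptotically linear, multiplicative and $*$-preserving, and (c) injective modulo $\bigoplus_n C_0[0,1)\otimes\Cuntz{2}$ on all of $A$, with cone end still equal to $0$. Once such a family is in hand, Definition~\ref{def:connectivity} (equivalently Proposition~\ref{products:exact}) yields that $A$ is connective.
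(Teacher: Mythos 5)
You have assembled the correct ingredients --- Blanchard's subtriviality theorem realizing $A$ as a $C(X)$-subalgebra of $C(X)\otimes\Cuntz{2}$, the connectivity of the distinguished fiber $A(x_0)$, and the reformulation of connectivity via Proposition~\ref{products:exact} --- and these are indeed the ingredients behind the result (which this paper does not reprove, but imports from \cite{Dad-Pennig-homotopy-symm}, Cor.~3.4). However, your argument stops exactly where the theorem begins. The ``transport'' step that is supposed to restore injectivity is never constructed, only described in aspirational terms, and you concede as much. What would actually be needed is, for every $x$ in (at least a dense subset of) $X$, that the evaluation $\mathrm{ev}_x\colon A \to A(x)\subset \Cuntz{2}$ is null-homotopic as a cpc discrete asymptotic morphism of $A$, together with a diagonalization that runs all of these null-homotopies simultaneously while keeping the resulting family injective modulo $\bigoplus_n$. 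None of this is supplied, and it cannot be supplied by ``sliding fibers toward $x_0$'': the field is not locally trivial, the fibers $A(x)$ for $x\neq x_0$ need not be isomorphic to $A(x_0)$ (nor connective), so ``transporting the fiber datum at $x_0$'' has no operational meaning; the only structure available is the continuity of the sections $x\mapsto a(x)$.

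Moreover, the specific mechanism you hint at --- chains of nearby points, with the cone direction absorbing the transitions --- meets a concrete obstruction. Continuity of sections makes $\|a(x)-a(x')\|$ small for $x'$ near $x$ only for $a$ ranging over a fixed finite set; there is no uniformity over the unit ball of $A$. For two \emph{fixed} distinct points $x,x'$, the linear interpolation $h_t=(1-t)\mathrm{ev}_x+t\,\mathrm{ev}_{x'}$ has multiplicativity defect $h_t(ab)-h_t(a)h_t(b)=t(1-t)\bigl(a(x)-a(x')\bigr)\bigl(b(x)-b(x')\bigr)$, which is independent of the asymptotic parameter $n$ and does not tend to zero; so no fixed chain of interpolations is a homotopy of asymptotic morphisms. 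To repair this one must shrink the mesh of the chains as $n\to\infty$, coupled with an exhausting sequence of finite subsets of $A$, while still retaining enough evaluations to detect every element of $A$ --- this coupled diagonalization is precisely the mathematical content of the theorem, and it is absent from your proposal. It is also worth noting that the cited proof does not attempt any such geometric construction: in \cite{Dad-Pennig-homotopy-symm} the statement appears as a corollary of the permanence theorem proved there (Thm.~3.3: subalgebras, split extensions, tensor products) combined with Blanchard's embedding, an algebraic route that sidesteps the gluing problem your sketch leaves open.
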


\begin{corollary} \label{cor:fields}
Let $A$ be a separable continuous field of nuclear $C^*$-algebras over a locally compact metrizable space $X$ that has no compact open subsets. Then $A$ is connective.
\end{corollary}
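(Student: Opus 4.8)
The plan is to reduce the statement to Theorem~\ref{thm:fields} by passing to the one-point compactification $X^+ = X \cup \{\infty\}$ and extending the given field by a zero fiber over the point at infinity. In this way the single, trivially connective fiber $0$ at $\infty$ is made to drive the whole argument, and the no-compact-open-subsets hypothesis enters only to guarantee that $X^+$ is connected.

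First I would record the relevant topology of $X^+$. Since $A$ is separable, the base space $X$ is second countable; being also locally compact and metrizable, its one-point compactification $X^+$ is compact and metrizable. The key point is that $X^+$ is \emph{connected} precisely because $X$ has no nonempty compact open subset: a clopen partition $X^+ = U \sqcup V$ with $\infty \in V$ would present $U$ as a subset of $X$ that is open in $X^+$ and, being closed in the compact space $X^+$, compact, hence a nonempty compact open subset of $X$; conversely any such $U$ disconnects $X^+$. In particular the hypothesis forces $X$ to be noncompact, so that $\infty$ is a genuine new point.

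Next I would extend the continuous field structure from $X$ to $X^+$ by declaring the fiber over $\infty$ to be the zero algebra. Because every $a \in A$ satisfies $\lVert a(x)\rVert \to 0$ as $x \to \infty$, the fiberwise norm $x \mapsto \lVert a(x)\rVert$ extends continuously to $X^+$ with value $0$ at $\infty$; this produces a separable continuous field $\widetilde{A}$ of nuclear $C^*$-algebras over the compact connected space $X^+$. Moreover its algebra of continuous sections is canonically $A$ itself, since a continuous section of $\widetilde{A}$ is forced to vanish at $\infty$, which is exactly the $C_0$-condition defining the sections over $X$. As the fiber of $\widetilde{A}$ over $\infty$ is $0$, and hence trivially connective, Theorem~\ref{thm:fields} yields that $\widetilde{A} = A$ is connective.

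The only genuinely technical step is checking that adjoining a zero fiber at $\infty$ produces a bona fide continuous field over $X^+$ and that its section algebra coincides with $A$; I expect this to be routine, following from the $C_0$-decay of sections together with the continuity of the fiberwise norm. The substance of the argument is thus the elementary observation that the absence of nonempty compact open subsets is exactly what makes the one-point compactification connected, which is what allows Theorem~\ref{thm:fields} to apply.
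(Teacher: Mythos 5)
Your proposal is correct and follows essentially the same route as the paper: pass to the one-point compactification, observe that the absence of nonempty compact open subsets of $X$ is exactly what makes it connected, view $A$ as a field over the compactification with zero fiber at infinity, and apply Theorem~\ref{thm:fields}. The only difference is cosmetic: the paper outsources the step of extending the field (with zero fiber at the added point) to a remark of Blanchard \cite{Blanchard:subtriviality}, whereas you sketch the verification directly.
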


\begin{proof} \label{pf:fields}
Let $Y=X\cup \{y_0\}$ be the one-point compactification of $X$. Then $Y$ is a compact metrizable space which must be connected.
Indeed, arguing by contradiction, say that $Y=U\cup V$ with $U$, $V$ open and nonempty with  $y_0\in U$ and $U \cap V = \emptyset$. Then
$V=V\cap X$ is  both an open and compact subset of $X$.

We can view $A$ as a continuous field over $Y$ (see the remark on page~145 of \cite{Blanchard:subtriviality}) and the fiber over $y_0$ satisfies $A(y_0)=\{0\}$. It follows that $A$ is connective by Thm.~\ref{thm:fields}.
\end{proof}

\subsection{Obstructions to connectivity}
If $A$ is a $C^*$-algebra, we denote by $\widehat{A}$ the spectrum of $A$, which consists of all unitary equivalence classes of irreducible representations and by $\Prim{A}$ the primitive spectrum of $A$ consisting of kernels of irreducible representations. The unitary dual $\widehat{G}$ of a group $G$ identifies with $\widehat{C^*(G)}$. Recall that $\widehat{A}$ is topologized by pulling-back the Jacobson topology of $\Prim{A}$ under the natural map $\widehat{A}\to \Prim{A}$, $\pi\mapsto \mathrm{ker}\,{\pi}$, \cite{Dix:C*}.
 Let $\pi$ and $(\pi_n)_n$ be irreducible representations of $A$ acting on the same separable Hilbert space $H$. Suppose that
 $\|\pi_n(a)\xi-\pi(a)\xi\|\to 0$ for all $a \in A$ and $\xi\in H$. Then the sequence $(\pi_n)_n$ converges to $\pi$ in the topology of $\widehat{A}$, see \cite[Sec.~3.5]{Dix:C*}.
\begin{proposition}\label{prop:compact-open}
Let $A$ be a separable $C^*$-algebra.
\begin{enumerate}[(i)]
\item If $\mathrm{Prim}(A)$ contains a non-empty compact open subset, then $A$ is not connective.
\item If $A$ is nuclear and
$\mathrm{Prim}(A)$ is Hausdorff, then $A$ is connective if and only if $\mathrm{Prim}(A)$ does not contain a non-empty compact open subset.
\end{enumerate}
\end{proposition}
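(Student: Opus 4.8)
The plan is to obtain (i) as an obstruction flowing from the property recorded as (iii) in the introduction, namely that a connective $C^*$-algebra $A$ satisfies: $A \otimes B$ has no nonzero projection for \emph{every} $C^*$-algebra $B$ (see \cite[Thm.~3.3]{Dad-Pennig-homotopy-symm}). Granting (i), the ``only if'' direction of (ii) is immediate, and the substantive content of (ii) is the converse for nuclear $A$ with Hausdorff $\Prim{A}$, which I would deduce from Corollary~\ref{cor:fields} after exhibiting $A$ as a continuous field over $\Prim{A}$.

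For (i), suppose $\Prim{A}$ has a nonempty compact open subset and put $B = \Cuntz{\infty} \otimes \K$. Since $\Cuntz{\infty}$ is simple, nuclear and unital, the assignment $J \mapsto \overline{J \otimes B}$ is a lattice isomorphism from the ideals of $A$ onto the ideals of $A \otimes B$, so it induces a homeomorphism $\Prim{A} \cong \Prim{A \otimes B}$; in particular the latter again has a nonempty compact open subset. Now $A \otimes B = A \otimes \Cuntz{\infty} \otimes \K$ is stable and purely infinite, being $\Cuntz{\infty}$-absorbing. The key input, supplied by the work of Blackadar--Cuntz \cite{BlaCuntz:infproj} and Pasnicu--R\o{}rdam \cite{Pasnicu-Rordam}, is that in a purely infinite $C^*$-algebra the Cuntz classes of projections correspond precisely to the compact open subsets of the primitive spectrum; a nonempty compact open subset of $\Prim{A \otimes B}$ therefore forces a nonzero projection in $A \otimes B$. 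This contradicts property (iii) applied with this $B$, so $A$ is not connective, which is exactly statement (i).

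For the remaining implication of (ii), assume $A$ is separable and nuclear with $X := \Prim{A}$ Hausdorff and without nonempty compact open subsets. Primitive spectra are locally compact, and separability makes $X$ second countable, so $X$ is locally compact metrizable. Hausdorffness of $X$ lets me realize $A$ as a continuous $C_0(X)$-algebra over $X$ with fiber $A/P$ over $P \in X$: the Dauns--Hofmann identification provides the $C_0(X)$-structure, upper semicontinuity of the norm functions $P \mapsto \lVert a(P)\rVert$ is automatic, and lower semicontinuity follows from the separation of points in $\Prim{A}$. Each fiber $A/P$ is a quotient of the nuclear algebra $A$, hence nuclear, so $A$ is a separable continuous field of nuclear $C^*$-algebras over a locally compact metrizable space with no compact open subsets. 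Corollary~\ref{cor:fields} then gives that $A$ is connective. Combined with the implication ``connective $\Rightarrow$ no nonempty compact open subset'' furnished by (i), this establishes the equivalence in (ii).

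The main obstacle is the projection-existence step in (i): one must know that a nonempty compact open subset of the primitive spectrum of a stable purely infinite $C^*$-algebra is represented by an honest projection, which is where the structural results of \cite{BlaCuntz:infproj} and \cite{Pasnicu-Rordam} are invoked and where the hypotheses must be handled with care (pure infiniteness of $A \otimes \Cuntz{\infty} \otimes \K$, and the preservation of the compact open subset under the tensoring). A secondary technical point, needed for (ii), is the verification that Hausdorffness of $\Prim{A}$ genuinely upgrades the canonical $C_0(X)$-structure on $A$ to a \emph{continuous} field, so that Corollary~\ref{cor:fields} applies.
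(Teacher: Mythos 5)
Your proposal is correct and takes essentially the same approach as the paper: for (i), the paper tensors with $\Cuntz{2}$ and applies \cite[Prop.~2.7]{Pasnicu-Rordam} to convert the compact open subset of $\Prim{A}$ into a nonzero projection in $A\otimes\Cuntz{2}$, contradicting connectivity --- exactly your argument with $\Cuntz{\infty}\otimes\K$ in place of $\Cuntz{2}$ (the paper's choice lets the Pasnicu--R\o{}rdam proposition be quoted verbatim, without your separate verifications of pure infiniteness and of the ideal-lattice homeomorphism). For (ii), the paper, like you, gets one direction from (i) and for the converse cites Fell \cite{Fell} (see also \cite[Sec.~2.2.2]{BK}) to regard $A$ as a separable continuous field of nuclear $C^*$-algebras over the locally compact Hausdorff space $\Prim{A}$ and then applies Corollary~\ref{cor:fields}; the semicontinuity details you sketch are precisely the content of that cited result.
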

\begin{proof} (i) Set $X=\mathrm{Prim}(A)$. If $X$ has a non-empty compact open subset, then $A\otimes \Cuntz{2}$ contains a nonzero projection by \cite[Prop.~2.7]{Pasnicu-Rordam} and hence $A$ cannot be connective.

(ii) One implication follows from (i). For the other implication suppose that $X=\mathrm{Prim}(A)$ does not contain a non-empty compact open subset. Since $X$ is Hausdorff by assumption, $A$ is a nuclear separable continuous field over the locally compact space $X$, \cite{Fell}. This is explained in detail in \cite[Sec.\ 2.2.2]{BK}. Now we apply Cor.\ \ref{cor:fields}.
\end{proof}

We would like thank Gabor Szabo for pointing out the following invariance property of connectivity.
\begin{proposition} Let $A$ and $B$ be separable nuclear $C^*$-algebra with homeomorphic primitive spectra.
Then $A$ is connective if and only if $B$ is connective.
\end{proposition}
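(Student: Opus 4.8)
The plan is to avoid describing connectivity intrinsically and instead push both algebras into a stable, $\Cuntz{2}$-absorbing form, in which the algebra is rigidly determined by its primitive spectrum, and then to quote Kirchberg's classification theorem. In outline: I would first show that connectivity of a separable nuclear $C^*$-algebra is unchanged upon tensoring with $\Cuntz{2}\otimes\K$, then observe that a homeomorphism $\Prim{A}\cong\Prim{B}$ forces $A\otimes\Cuntz{2}\otimes\K\cong B\otimes\Cuntz{2}\otimes\K$, and finally use that connectivity is a $*$-isomorphism invariant to transport it across.

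For the first step I would invoke Prop.~\ref{products} applied to $A$ (and separately to $B$). The implication $(i)\Rightarrow(iii)$ shows that if $A$ is connective then $A\otimes\Cuntz{2}\otimes\K$ is connective. Conversely, $\Cuntz{2}\otimes\K$ contains a nonzero projection, for instance $1_{\Cuntz{2}}\otimes e$ with $e\in\K$ a rank-one projection, so the implication $(ii)\Rightarrow(i)$ shows that connectivity of $A\otimes\Cuntz{2}\otimes\K$ already forces connectivity of $A$. Hence $A$ is connective if and only if $A\otimes\Cuntz{2}\otimes\K$ is connective, and likewise for $B$. This reduces the problem to the stable $\Cuntz{2}$-absorbing algebras $A\otimes\Cuntz{2}\otimes\K$ and $B\otimes\Cuntz{2}\otimes\K$, which are separable, nuclear, stable and $\Cuntz{2}$-absorbing (since $\Cuntz{2}\otimes\Cuntz{2}\cong\Cuntz{2}$). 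Because $\Cuntz{2}$ and $\K$ are simple, tensoring by $\Cuntz{2}\otimes\K$ leaves the primitive spectrum unchanged, so $\Prim{A\otimes\Cuntz{2}\otimes\K}\cong\Prim{A}$ and symmetrically for $B$.

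Now I would appeal to Kirchberg's $\Cuntz{2}$-stable classification theorem, in the form asserting that two separable nuclear stable $\Cuntz{2}$-absorbing $C^*$-algebras with homeomorphic primitive spectra are $*$-isomorphic. Given the homeomorphism $\Prim{A}\cong\Prim{B}$ and the identifications of the previous step, this yields $A\otimes\Cuntz{2}\otimes\K\cong B\otimes\Cuntz{2}\otimes\K$. Since connectivity is defined through the existence of a liftable $*$-monomorphism and is therefore preserved under $*$-isomorphism, the chain of equivalences closes: $A$ is connective $\iff A\otimes\Cuntz{2}\otimes\K$ is connective $\iff B\otimes\Cuntz{2}\otimes\K$ is connective $\iff B$ is connective. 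The only genuinely nontrivial ingredient, and hence the main obstacle, is the precise invocation of Kirchberg's theorem: one must make sure that a homeomorphism of primitive spectra (equivalently, an isomorphism of the lattices of open subsets of these second-countable sober spaces) actually produces a stable $\Cuntz{2}$-isomorphism, and that the standing hypotheses of separability, nuclearity and $\Cuntz{2}$-absorption are in place. Everything else in the argument is formal.
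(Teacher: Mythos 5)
Your proposal is correct and follows essentially the same route as the paper: the paper's proof likewise invokes Kirchberg's classification theorem to obtain $A\otimes \Cuntz{2}\otimes \K \cong B \otimes \Cuntz{2}\otimes \K$ and then applies Proposition~\ref{products} to transfer connectivity. The only difference is that you spell out the bookkeeping (the equivalences $(i)\Leftrightarrow(ii)\Leftrightarrow(iii)$ and the invariance of the primitive spectrum under tensoring) that the paper leaves implicit.
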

\begin{proof} Kirchberg's classification theorem \cite{Kir:Michael} implies that if $A$ and $B$ are as in the statement, then
$A\otimes \Cuntz{2} \otimes \K \cong B\otimes \Cuntz{2} \otimes \K$.  The desired conclusion follows now from Proposition~\ref{products}.
\end{proof}

\begin{definition} \label{def:shielded}
	Let $A$ be a separable $C^*$-algebra. A point $\pi \in \widehat{A}$ is called \emph{shielded}, if $\widehat{A} \setminus \{\pi\} \neq \emptyset$ and any sequence $(\pi_n)_n$ in $\widehat{A}\,\setminus \{\pi\}$ which converges to $\pi$ has a subsequence that converges to another point $\eta \in \widehat{A}\setminus \{\pi\}$.
\end{definition}

\begin{lemma} \label{lem:shielded}
Let $A$ be a unital separable $C^*$-algebra. If a point $\pi \in \widehat{A}$ is closed and shielded, then
$I= \ker\,\pi$ is not connective.
\end{lemma}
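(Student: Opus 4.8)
The plan is to use Proposition~\ref{prop:compact-open}(i), which says that a separable $C^*$-algebra whose primitive spectrum contains a non-empty compact open subset is not connective. So the goal reduces to producing such a compact open subset inside $\Prim{I}$, where $I = \ker\,\pi$. First I would recall the standard correspondence between the ideal structure of $A$ and that of its ideals and quotients: if $I \trianglelefteq A$ is a closed two-sided ideal, then $\Prim{I}$ is canonically homeomorphic to the open subset $\{P \in \Prim{A} : I \not\subseteq P\}$ of $\Prim{A}$, while $\Prim{A/I}$ is the complementary closed subset $\{P : I \subseteq P\}$. Translated to spectra, $\widehat{I}$ identifies with $\widehat{A}\setminus \overline{\{\pi\}}$ (the reps not killing $I$), and because $\pi$ is assumed \emph{closed}, $\overline{\{\pi\}} = \{\pi\}$, so $\widehat{I} = \widehat{A}\setminus\{\pi\}$ as a topological subspace of $\widehat{A}$.

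Let me think about what I actually need. The point $\pi$ is closed, so $\{\pi\}$ is closed in $\widehat{A}$, hence $\widehat{A}\setminus\{\pi\} = \widehat{I}$ is an open subset; since $A$ is unital, $\widehat{A}$ (equivalently $\Prim{A}$) is compact. I want to show $\widehat{I}$, or rather $\Prim{I}$, contains a non-empty compact open subset. The natural candidate is $\widehat{I}$ itself, i.e.\ its compactness. Here is where the \emph{shielded} hypothesis enters. I claim $\widehat{I} = \widehat{A}\setminus\{\pi\}$ is compact: it is the continuous image under the map $\widehat{A}\to\Prim{A}$ of a set I want to show is closed-in-$\Prim{A}$-after-removing-$\pi$, but more directly I would argue that $\Prim{I}$ is compact by showing it is closed in $\Prim{A}$. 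The shielded condition says precisely that no sequence in $\widehat{A}\setminus\{\pi\}$ can converge \emph{only} to $\pi$; every such convergent sequence has a second limit point in $\widehat{A}\setminus\{\pi\}$. This should prevent $\pi$ from being a limit of $\Prim{I}$ in the sense needed to make $\Prim{I}$ non-closed, so that the closure of $\Prim{I}$ in $\Prim{A}$ is still contained in $\Prim{I}$, giving compactness of $\Prim{I}$ (being a closed subset of the compact space $\Prim{A}$).

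More carefully, the key step is to upgrade the sequential shielding condition into a genuine topological closedness statement. I would show: the closure of $\Prim{I}$ in the compact space $\Prim{A}$ equals $\Prim{I}$ itself. If $\ker\,\pi$ were in that closure, then (since $\Prim{A}$ for separable $A$ is second countable, so sequences suffice to detect closure) there would be a sequence $(\pi_n)_n$ in $\widehat{A}\setminus\{\pi\}$ whose kernels converge to $\ker\,\pi$; passing to the spectrum and using the characterization of convergence recalled before Prop.~\ref{prop:compact-open}, one arranges that $(\pi_n)_n$ converges to $\pi$ in $\widehat{A}$. By the shielded hypothesis $(\pi_n)_n$ also converges to some $\eta \in \widehat{A}\setminus\{\pi\}$, so $\ker\,\eta$ is a limit of $(\ker\,\pi_n)_n$ lying in $\Prim{I}$; but in a non-Hausdorff setting a sequence can converge to several points, and the subtlety is to turn ``$\ker\,\pi$ is a limit'' into a contradiction with $\Prim{I}$ being already closed around $\eta$. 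The cleanest route is: since $\pi$ is closed, $\{\ker\,\pi\}$ is closed in $\Prim{A}$, so $\Prim{I} = \Prim{A}\setminus\{\ker\,\pi\}$ is open; the shielded condition will be used to show it is \emph{also} closed, hence clopen, and being a closed subset of the compact space $\Prim{A}$ it is compact. Then $\Prim{I}$ is a non-empty compact open subset of itself, and Prop.~\ref{prop:compact-open}(i) yields that $I$ is not connective.

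\textbf{The main obstacle} I anticipate is the non-Hausdorff nature of $\Prim{A}$: a convergent sequence there may have many limits, so the passage between ``shielded'' (a sequential condition on $\widehat{A}$) and ``$\Prim{I}$ is closed in $\Prim{A}$'' (a topological condition on the primitive spectrum) must be handled with care, in particular lifting convergence of kernels in $\Prim{A}$ back to convergence of representations in $\widehat{A}$ so that the shielding hypothesis applies, and then ensuring that the extra limit $\eta$ genuinely obstructs $\ker\,\pi$ from being a limit point that lies outside $\Prim{I}$. I expect second countability of $\Prim{A}$ (from separability of $A$) to be essential so that sequences detect topology, and the explicit convergence criterion $\|\pi_n(a)\xi - \pi(a)\xi\|\to 0$ recalled just before the proposition to be the bridge that realizes an abstract convergent sequence of kernels by a concretely convergent sequence of irreducible representations on a common Hilbert space.
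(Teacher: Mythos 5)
Your reductions are correct and coincide with the paper's: since $\pi$ is a closed point, $\widehat{I}=\widehat{A}\setminus\{\pi\}$ and $\Prim{I}=\Prim{A}\setminus\{\ker\pi\}$, and by Prop.~\ref{prop:compact-open}(i) (equivalently \cite[Prop.~2.7]{Pasnicu-Rordam}, which the paper cites directly) everything comes down to showing that this non-empty open set is \emph{compact}. The gap lies in your proposed route to compactness. Your ``cleanest route'' is to show that $\Prim{I}$ is \emph{closed} in the compact space $\Prim{A}$, i.e.\ that $\{\ker\pi\}$ is open. That intermediate statement is false in general, and it fails precisely in the situations the lemma is designed for: the shielded condition of Def.~\ref{def:shielded} does not forbid sequences in $\widehat{A}\setminus\{\pi\}$ from converging to $\pi$; it only forces any such sequence to have a \emph{second} limit outside $\{\pi\}$. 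For the Hantzsche-Wendt group (Sec.~\ref{sec:spectrum-HW}) the paper exhibits sequences $\pi_n\to\iota$ with $\pi_n\neq\iota$, so $\{\iota\}$ is not open and $\Prim{I(G)}$ is \emph{not} closed in $\Prim{C^*(G)}$, yet the lemma must apply there (Cor.~\ref{cor:HW_not_connective}). This is also why the contradiction you were searching for cannot be produced: a sequence converging simultaneously to $\ker\pi$ and to $\ker\eta$ is a perfectly consistent state of affairs in a non-Hausdorff space, so ``$\ker\pi$ is a limit'' contradicts nothing.

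The repair is to prove compactness \emph{without} closedness --- in non-Hausdorff spaces compact subsets need not be closed, and that is exactly the point here. The paper argues as follows: since $A$ is unital and separable, $\widehat{A}$ is compact and second countable, so compactness of the subspace $\widehat{A}\setminus\{\pi\}$ is equivalent to its sequential compactness \cite[p.~138]{book:Kelley}. Given any sequence in $\widehat{A}\setminus\{\pi\}$, compactness of $\widehat{A}$ yields a subsequence converging in $\widehat{A}$; if it converges to some point other than $\pi$ we are done, and if it converges to $\pi$, shieldedness supplies a second limit $\eta\in\widehat{A}\setminus\{\pi\}$ of that same subsequence, which is the required limit point \emph{inside} the set. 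Thus the non-uniqueness of limits, which you identified as the main obstacle, is not something to be circumvented but is the very mechanism of the proof. Compactness and openness then descend from $\widehat{A}\setminus\{\pi\}$ to $\Prim{A}\setminus\{\ker\pi\}$ along the open continuous quotient $q\colon\widehat{A}\to\Prim{A}$, using that $q^{-1}(\ker\pi)=\{\pi\}$ because $\pi$ is closed; the rest of your argument then goes through.
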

\begin{proof} Observe that $I\neq \{0\}$, since $\widehat{A}\setminus \{\pi\}\neq \emptyset$ and $\{\pi\}$ is closed.
By Proposition~\ref{prop:compact-open} it suffices to show that $\text{Prim}(I)=\text{Prim}(A)\setminus \{I\}$  is a nonempty compact-open subset of
$\text{Prim}(A)$.
Since $\{\pi\}$ is closed, it follows that $q^{-1}(I)=\{\pi\}$ and hence $q(\widehat{A}\setminus \{\pi\})=\text{Prim}(A)\setminus \{I\}$. The quotient map $q \colon \widehat{A} \to \Prim{A}$ is continuous and open, since the topology of $\widehat{A}$ is defined as the preimage of the topology of $\Prim{A}$.
Therefore, the lemma follows if we show that $\widehat{A}\setminus \{\pi\}$ is  compact and open.  $\widehat{A}\setminus \{\pi\}$ is open because $\{\pi\}$ is closed. Since $\widehat{A}$ is compact and satisfies the second axiom of countability \cite{Dix:C*}, it suffices to show that $\widehat{A}\setminus \{\pi\}$  is sequentially compact, \cite[p.~138]{book:Kelley}. Let $(\pi_n)_n$ be a sequence in $\widehat{A}\setminus \{\pi\}$. By compactness of $\widehat{A}$ it contains a subsequence $(\pi_{n_k})_k$ converging in $\widehat{A}$. If it converges to $\pi \in \widehat{A}$, then it has a subsequence that converges to some other point $\eta \in \widehat{A}\setminus \{\pi\}$, because $\pi$ is shielded.
Hence $\widehat{A}\setminus \{\pi\}$ is also compact.
\end{proof}

\begin{corollary} \label{cor:iota_shielded}
Let $G$ be a countable discrete group. If the trivial representation $\iota \in \widehat{G}$ is shielded, then $I(G)$ is not connective. 
\end{corollary}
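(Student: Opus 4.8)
The plan is to deduce the corollary directly from Lemma~\ref{lem:shielded}, applied to the unital separable $C^*$-algebra $A = C^*(G)$ and the point $\pi = \iota \in \widehat{A}$. Since $G$ is countable and discrete, $C^*(G)$ is separable and unital, with unit given by the class of the neutral element, so the standing hypotheses of the lemma are satisfied. The representation $\iota$ is shielded by assumption, so the only remaining point to verify is that $\iota$ is a \emph{closed} point of $\widehat{C^*(G)} = \widehat{G}$; once this is in place, Lemma~\ref{lem:shielded} immediately gives that $I(G) = \ker\iota$ is not connective.

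To establish closedness, I would first identify the fiber of the natural map $q \colon \widehat{A} \to \Prim{A}$ over $\ker\iota = I(G)$. The trivial representation is one-dimensional, hence $C^*(G)/I(G) \cong \C$, and $\C$ has exactly one irreducible representation up to unitary equivalence. Therefore $\iota$ is the unique element of $\widehat{A}$ with kernel $I(G)$, i.e.\ $q^{-1}(I(G)) = \{\iota\}$. Next I would argue that $\{I(G)\}$ is closed in $\Prim{A}$: because $I(G)$ has codimension one, it is a maximal closed two-sided ideal, since any ideal $J$ with $I(G) \subsetneq J \subseteq C^*(G)$ has nonzero image in $C^*(G)/I(G) \cong \C$ and hence equals $C^*(G)$. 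In the Jacobson topology a primitive ideal is a closed point precisely when it is maximal among primitive ideals, so $\{I(G)\}$ is closed in $\Prim{A}$.

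Since $\widehat{A}$ carries the topology pulled back from $\Prim{A}$ along $q$, closed subsets of $\widehat{A}$ are exactly the $q$-preimages of closed subsets of $\Prim{A}$. As $q^{-1}(I(G)) = \{\iota\}$, the singleton $\{\iota\}$ is the preimage of the closed set $\{I(G)\}$ and is therefore closed in $\widehat{A}$. With $\iota$ now both closed and shielded, Lemma~\ref{lem:shielded} yields the conclusion that $I(G)$ is not connective. The only nontrivial ingredient is the closedness of $\iota$, and this presents no serious obstacle: the codimension-one property of the augmentation ideal makes its maximality—and hence the closedness of the corresponding point—automatic.
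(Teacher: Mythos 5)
Your proof is correct and takes essentially the same approach as the paper: both reduce the corollary to Lemma~\ref{lem:shielded} by observing that $\{\iota\}$ is a closed point of $\widehat{G}$ because $\iota$ is one-dimensional. The paper states this closedness in one line, whereas you supply the underlying details (uniqueness of the irreducible representation with kernel $I(G)$, maximality of the codimension-one ideal, and the pullback of the Jacobson topology), all of which are accurate.
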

\begin{proof}
  Since $\iota$ is a one-dimensional representation, it follows  that $\{\iota\}$ is closed  in~$\widehat{G}$.
  Thus, the statement follows from Lemma~\ref{lem:shielded}.
\end{proof}

\section{Connectivity of crystallographic groups}
It is known that there are precisely $10$ closed flat $3$-dimensional manifolds.
Conway and Rossetti \cite{Conway-Rossetti} call these manifolds
 platycosms (``flat universes"). The Hantzsche-Wendt manifold \cite{Hantzsche-Wendt}, or the didicosm in the terminology of \cite{Conway-Rossetti},   is the only platycosm with finite homology.
 Its fundamental group $G$, known as the Hantzsche-Wendt group,  is generated by two elements $x$ and $y$ subject to two relations:
\[x^2yx^2=y,\quad y^2xy^2=x.\]
The group $G$ is one of the classic torsion free 3-dimensional crystallographic groups,  \cite{Hantzsche-Wendt, Conway-Rossetti}. It is useful to introduce the notation $z=(xy)^\inv$.

A  concrete realization of $G$ as rigid motions  of $\R^3$ is given by the following transformations $X$, $Y$, $Z$ that correspond to the group elements $x$, $y$ and $z$.
\[X (\xi)= A \xi+{a}, \quad Y (\xi)= B \xi+{b},\quad Z (\xi)= C \xi+{c},\quad \xi\in \R^3,\]
where
\[A=\begin{pmatrix}
       1 & 0 & 0  \\
       0 & -1 & 0  \\
       0 & 0 & -1
     \end{pmatrix},
     \quad B=\begin{pmatrix}
       -1 & 0 & 0  \\
       0 & 1 & 0  \\
       0 & 0 & -1
     \end{pmatrix},\quad
     C=\begin{pmatrix}
       -1 & 0 & 0  \\
       0 & -1 & 0  \\
       0 & 0 & 1
     \end{pmatrix},\] and
\[{a}=\begin{pmatrix}
       1/2   \\
       1/2   \\
        0
     \end{pmatrix}, \quad {b}=\begin{pmatrix}
       0  \\
       1/2   \\
       1/2
     \end{pmatrix},\quad {c}=\begin{pmatrix}
       1/2   \\
       0  \\
        1/2
     \end{pmatrix}.\]

The transformations $X^2$, $Y^2$ and $Z^2$ are just translations by unit vectors in the positive directions of the coordinate axes.

One shows (independently of the previous concrete realization) that the elements $x^2$, $y^2$ and $z^2$ commute. Moreover one has the following relations in $G$:
\[x x^2x^{-1}=x^2, \quad x y^2x^{-1}=y^{-2},\quad x z^2x^{-1}=z^{-2}\]
\[y x^2y^{-1}=x^{-2}, \quad y y^2y^{-1}=y^{2},\quad y z^2y^{-1}=z^{-2}\]
\[z x^2z^{-1}=x^{-2}, \quad z y^2z^{-1}=y^{-2},\quad z z^2z^{-1}=z^{2}\]

The subgroup $N$ of $G$ generated by $x^2,y^2$ and $z^2$ is normal in $G$ and it is isomorphic to $\Z^3\cong \Z x^2\oplus \Z y^2\oplus \Z z^2$. Let $q:G \to H=G/N$ denote the quotient map.
\[
\xymatrix{
	1 \ar[r] & N \ar[r] & G \ar[r]^-{q} & H \ar[r] & 1
}
\]
$H$ is isomorphic to $\Z/2\oplus \Z/2$ with generators are  $q(x)$ and $q(y)$.

For later use, we will need the following identities that hold in $G$.
\begin{gather}\label{eq:useful}
 x^\inv y=yxy^2z^{-2}=zx^2z^{-2},\quad x^\inv z=yz^{2},\\  \notag y^\inv x=zx^2,\quad  y^\inv z=x (x^{-2}y^2).
\end{gather}

\subsection{Induced representations and the unitary dual of $G$} \label{sec:spectrum-HW}
Based on Corollary~\ref{cor:iota_shielded} we will show that $I(G)$ for the Hantzsche-Wendt group $G$ is not connective. This requires a thorough analysis of the spectrum $\widehat{G}$.

Our basic reference for this section is the book of Kaniuth and Taylor \cite{book:Kaniuth-Taylor}.
The  unitary dual of $G$ consists of unitary equivalence classes of irreducible unitary representations of $G$ and is denoted by $\widehat{G}$.
$G$ acts on $\widehat{N}\cong \mathbb{T}^3$ by $g\cdot \chi = \chi(g\cdot g^{-1})$.
If we identify the character $\chi\in \widehat{N}$ with the point $(\chi(x^2),\chi(y^2),\chi(z^2))=(u,v,w)\in \T^3$, then the action of $G$ is described as follows:
\[x\cdot (u,v,w)=(u,\bar{v},\bar{w}),\quad y\cdot (u,v,w)=(\bar{u},v,\bar{w}), \quad z\cdot (u,v,w)=(\bar{u},\bar{v},w).\]
The stabilizer of a character $\chi$ is the subgroup $G_\chi$ of $G$ defined by
$G_\chi=\{g\in G\colon \chi(g\cdot g^{-1})=\chi(\cdot)\}$. It is clear that $N\subset G_\chi$  and that there is a bijection from $G/G_\chi$ onto the orbit of $\chi$. In particular, the orbits of the action of $G$ on $\widehat{N}$ can only have length $1$, $2$ or $4$.
Mackey has shown that each irreducible representation $\pi\in \widehat{G}$ is supported by the orbit of some character $\chi\in \widehat{N}$,  in the sense that the restriction of $\pi$ to $N$ is unitarily equivalent to some multiple $m_\pi$ of the direct sum of the characters in the  orbit of $\chi$.
\[\pi_{|_N }\sim m_\pi \bigoplus_{g\in G/G_\chi} \chi(g\cdot g^{-1}). \]
In the sum above $g$ runs through a set of coset representatives.

Mackey's theory has a particularly nice form for  virtually abelian discrete groups.
Let $\Omega\subset \widehat{N}$ be a subset which intersects each orbit of $G$ exactly once.
For each $\chi\in \widehat{N}$, let  $\widehat{G_\chi}$ be the unitary dual of the stabilizer group $G_\chi$
and denote by $\widehat{G_\chi}^{(\chi)}$ the subset of $\widehat{G_\chi}$ consisting of classes of irreducible representations $\sigma$ of $G_\chi$ such the restriction of $\sigma$ to $N$ is unitarily equivalent to a multiple of $\chi$.
Then, according to \cite [Thm. 4.28]{book:Kaniuth-Taylor}
\begin{theorem}\label{thm:Mackey} $\widehat{G}=\left\{\mathrm{ind}_{G_\chi}^{\,G}(\sigma)\colon \sigma \in \widehat{G_\chi}^{(\chi)},\,\,\chi\in \Omega\right\}.$
\end{theorem}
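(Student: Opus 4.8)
The plan is to run Mackey's little-group (``normal subgroup analysis'') method, which in the present situation simplifies considerably because the quotient $H=G/N$ is \emph{finite}: all orbits of $G$ on $\widehat{N}$ are finite, so no measure-theoretic disintegration is needed and the whole argument becomes an exercise in finite-index induction. Concretely, I would show that the assignment $(\chi,\sigma)\mapsto \mathrm{ind}_{G_\chi}^{\,G}(\sigma)$, with $\chi\in\Omega$ and $\sigma\in\widehat{G_\chi}^{(\chi)}$, is a well-defined bijection onto $\widehat{G}$. The Mackey support statement displayed just above the theorem is taken as given.

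For surjectivity, let $\pi\in\widehat{G}$ act on $V$ and decompose $V=\bigoplus_{\chi}V_\chi$ into the $N$-eigenspaces $V_\chi=\{v\in V:\pi(n)v=\chi(n)v \text{ for all } n\in N\}$. Since $N$ is normal, $\pi(g)V_\chi=V_{g\cdot\chi}$, so $G$ permutes the eigenspaces along its orbits on $\widehat{N}$; irreducibility of $\pi$ forces the support $\{\chi:V_\chi\neq 0\}$ to be a single orbit (this is exactly the Mackey support statement). Pick the representative $\chi\in\Omega$ of that orbit. Then $V_\chi$ is invariant under the stabilizer $G_\chi$, and I set $\sigma:=\pi|_{G_\chi}$ acting on $V_\chi$; by construction $\sigma|_N$ is a multiple of $\chi$, so $\sigma\in\widehat{G_\chi}^{(\chi)}$ once irreducibility is checked. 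Choosing coset representatives $g_1,\dots,g_k$ for $G/G_\chi$ gives $V=\bigoplus_i \pi(g_i)V_\chi$, and the map $g_i\otimes v\mapsto \pi(g_i)v$ is a $G$-equivariant isomorphism $\mathrm{ind}_{G_\chi}^{\,G}(\sigma)\xrightarrow{\ \sim\ }\pi$. A proper $G_\chi$-invariant subspace $W\subsetneq V_\chi$ would yield the proper $G$-invariant subspace $\bigoplus_i\pi(g_i)W$, contradicting irreducibility of $\pi$; hence $\sigma$ is irreducible.

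For the converse and for injectivity I would invoke Mackey's irreducibility and equivalence criteria, whose hypotheses are trivial to verify here by restricting to $N$. Given $\chi\in\Omega$ and $\sigma\in\widehat{G_\chi}^{(\chi)}$, the representation $\mathrm{ind}_{G_\chi}^{\,G}(\sigma)$ is irreducible because for every $g\notin G_\chi$ the conjugate ${}^g\sigma$ restricts on $N$ to a multiple of $g\cdot\chi\neq\chi$, whereas $\sigma|_N$ is a multiple of $\chi$; thus $\sigma$ and ${}^g\sigma$ are already disjoint over $N\subset G_\chi\cap {}^gG_\chi$, which is the content of the irreducibility criterion. The same disjointness argument handles injectivity: induced representations attached to different orbits have disjoint $N$-spectra and so are inequivalent, while for a fixed $\chi$ an equivalence $\mathrm{ind}_{G_\chi}^{\,G}(\sigma)\cong\mathrm{ind}_{G_\chi}^{\,G}(\sigma')$ forces, via Mackey's equivalence theorem and the same $N$-restriction obstruction, some $g\in G_\chi$ with ${}^g\sigma\cong\sigma'$, i.e.\ $\sigma\cong\sigma'$.

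I expect the main obstacle to be the surjectivity/imprimitivity step --- proving that \emph{every} irreducible $\pi$ is genuinely induced from $G_\chi$, rather than merely supported on the orbit of $\chi$. In the general Mackey machine this is the imprimitivity theorem and requires $N$ to be regularly embedded (locally closed orbits, a measurable cross-section). Here, however, finiteness of the holonomy $H\cong\Z/2\oplus\Z/2$ collapses this to the elementary eigenspace-and-coset computation above, so the real work is simply bookkeeping the finite-index induction; the delicate general-position hypotheses are automatically satisfied.
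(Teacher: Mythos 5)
Your proof is correct, but it takes a different route from the paper for the simple reason that the paper gives no proof at all: the statement is quoted verbatim from \cite[Thm.~4.28]{book:Kaniuth-Taylor}, with the support statement presented as background attributed to Mackey. What you have done, in effect, is prove the cited theorem in the finite-index case. Your steps are sound: granting the support statement, the eigenspace decomposition, the relation $\pi(g)V_\chi = V_{g\cdot\chi}$ and the coset bookkeeping produce a $G$-equivariant unitary $\mathrm{ind}_{G_\chi}^{\,G}(\sigma)\cong\pi$ with $\sigma = \pi|_{G_\chi}$ acting on $V_\chi$, and the invariant-subspace argument gives irreducibility of $\sigma$. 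For the reverse inclusion and injectivity, Mackey's intertwining-number theorem needs no regularity hypotheses here because $[G:G_\chi]<\infty$ makes the double-coset computation finite and purely algebraic, and your idea of testing disjointness already on $N$ (where $\sigma$ and ${}^{g}\sigma$, $g\notin G_\chi$, restrict to multiples of $\chi$ and of a \emph{distinct} character in its orbit) is exactly the right way to verify its hypotheses. Two remarks. First, the displayed equality only claims that the induced representations are irreducible and exhaust $\widehat{G}$; injectivity of the parametrization is extra, though true and included in the cited theorem, so it costs nothing. Second, the one input you take as an axiom --- the support statement --- is itself elementary in this setting and you could have closed that gap too: since $[G:N]<\infty$, $C^*(G)$ embeds into $M_{[G:N]}(C^*(N))$ (the paper constructs precisely this embedding $\psi$ later, via Watatani's quasi-basis), hence $C^*(G)$ is subhomogeneous and every $\pi\in\widehat{G}$ is finite-dimensional; then $\pi|_N$ is a finite direct sum of characters which $G$ permutes, and irreducibility forces a single orbit with constant multiplicity. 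Including this observation would make your argument fully self-contained, whereas the paper's citation imports the general theory (regularly embedded abelian normal subgroups of locally compact groups) that is never actually needed for the Hantzsche--Wendt group.
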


Let $\iota$ be the trivial representation of $G$. We will prove that $\iota \in \widehat{G}$ is shielded by showing that any sequence $(\pi_n)_n$ of points in $\widehat{G}\setminus \{\iota\}$ that converges to $\iota$ has a subsequence which is convergent to a point $\eta\neq \iota$.

Let $R_\ell\subset \widehat{G}$ consist of those classes of irreducible representations which lie over $\ell$-orbits, i.e.\ the orbits of length $\ell$. Write $\widehat{G}$ as the disjoint union $\widehat{G}=R_1\cup R_2 \cup R_4$. It suffices to assume that all the elements $\pi_n$ belong to the same subset $R_\ell$. We distinguish the three possible cases for $\ell$:

\vspace{2mm}
\paragraph{\textbf{1-orbits}}
Consider the characters of $N$ of the form $\chi=(\ep_1,\ep_2,\ep_3)$, where $\ep_1,\ep_2,\ep_3 \in\{\pm 1\}$.
These are precisely the points in $\widehat{N}$ which are fixed under the action of $G$.
In other words $G_\chi=G$. Let  $(\pi_n)_n$ be a sequence of elements in $R_1 \subset \widehat{G}$ and such that $(\pi_n)_n$ is convergent to $\iota$. Since the restriction of $\pi_n$ to $N$ is a multiple of a character $\chi_n=(\ep_1(n),\ep_2(n),\ep_3(n))$, it follows that $\ep_1(n)=\ep_2(n)=\ep_3(n)=1$ for all sufficiently large $n$ and hence
since $\pi_n$ is irreducible, there is $m$ such that $\pi_n=\iota$ for $n\geq m$. Hence there is no sequence in $R_1 \setminus \{\iota\}$ which converges to $\iota$.

\vspace{2mm}
\paragraph{\textbf{2-orbits}}
The characters $\chi\in \widehat{N}$ with orbits of length two are those $\chi=(u,v,w)$ where precisely
only one of the coordinates is not equal to $\pm 1$. Let us argue first that if  $(\pi_n)_n$ is a sequence of elements in $R_2 \subset \widehat{G}$ such that $\pi_n$ lies over the orbit of $\chi_n=(u_n,v_n,w_n)$ and $(\pi_n)_n$ is convergent to $\iota$, then two of the coordinates $u_n,v_n,w_n$ must be equal to $1$ for all sufficiently large~$n$.

 Suppose that each $\pi_n$ lies over the orbit of a character $\chi_n$ of the form $\chi_n=(u_n,\ep_2(n),\ep_3(n))$ where $u_n\neq \pm 1$ and
  $\ep_2(n),\ep_3(n)\in \{\pm 1\}$. Then $G_{\chi_n}$ is generated by $x,y^2,z^2$ and $\{e,y\}$ are coset representatives for $G/G_{\chi_n}$.

   Since
 $\left.\pi_n\right|_{N}\sim m_n (\chi_n(\cdot)\oplus \chi_n(y\cdot y^{-1}))$, it follows that
\[\pi_n(y^2)\sim m_n\begin{pmatrix} \ep_2(n) & 0\\ 0 & \ep_2(n)\end{pmatrix},\quad
\pi_n(z^2)\sim m_n\begin{pmatrix} \ep_3(n) & 0\\ 0 & \ep_3(n)
\end{pmatrix}\]
and hence if $(\pi_n)_n$ converges to $\iota$, then we must have $\ep_2(n)=\ep_3(n)=1$ for all sufficiently large $n$.
The cases $\chi_n=(\ep_1,v_n,\ep_3)$ and $\chi_n=(\ep_1,\ep_2,w_n)$ are treated similarly.

\vskip 12pt
In view of the discussion above, it suffices to focus
 on characters of $N$ the form $\chi=(u,1,1)$. The orbit of $\chi$ consists of two points,
 $(u,1,1)$ and  $(\bar{u},1,1)$.
 The corresponding stabilizer $G_\chi$
 is generated by $x,y^2$ and $z^2$. In particular $G_\chi=N\cup xN$ and $G=G_\chi \cup y \,G_\chi$.
 The exact sequence
 \[
 	\xymatrix{
 		1 \ar[r] & N \ar[r] & G_{\chi} \ar[r] & \Z/2\Z \ar[r] & 1
 	}
 \]
 does not split since $G$ is torsion free.
 The quotient $G/G_\chi$ is generated by the coset of $y$.
 Let $\sigma \in \widehat{G}_\chi$ be an irreducible representation of $G_\chi$ whose restriction to $N$ is a multiple of $\chi$. Since $\chi(y^2)=\chi(z^2)=1$, it follows that $\sigma$ factors through $G_{\chi}/\Z^2$.
 Moreover we have a nontrivial central extension
 \[
 	\xymatrix{
	 	1 \ar[r] & \Z \ar[r] & G_{\chi}/\Z^2 \ar[r] & \Z/2\Z \ar[r] & 1
	 }
 \]
 where the normal subgroup is generated by the image of $x^2$ under the map $N \to N /\langle y^2,z^2\rangle $ and the quotient group is generated by the image of $q(x)$ under the map $H \to H/\langle q(y)\rangle $.
 Since $G_{\chi}/\Z^2$ is an abelian group, $\sigma$ must be a character such that $\sigma(x)^2=\sigma(x^2)=\chi(x^2)=u$. Thus $\sigma(x)=a\in \T$ with $a^2=u$.
 Let us compute the representation $\pi=\mathrm{ind}_{G_\chi}^{\,G}(\sigma)$ of $G$ induced by $\sigma$.
 It acts on the Hilbert space \[H_\pi=\{\xi:G \to \C\colon \xi(gh)=\sigma(h^{-1})\xi(g),\quad g \in G,h\in G_{\chi}\}.\]  Since $G=G_\chi \cup y \,G_\chi$,
 we can identify $H_\pi$ with $\C^2$ via the isometry $\xi\mapsto (\xi(e),\xi(y))$.
 Then $\pi(g)\xi=\xi(g^{-1}\cdot)$ can be described using (\ref{eq:useful}) as follows:
\begin{equation*}
	\begin{aligned}
 \pi(x)\xi(e)&=\xi(x^{-1})=\sigma(x)\xi(e)=a\xi(e) \\
 \pi(x)\xi(y)&=\xi(x^{-1}y)=\xi (yx y^2z^{-2})=\sigma( z^{2} y^{-2}x^\inv)\xi(y)=\bar{a}\xi(y) \\
 \pi(y)\xi(e)&=\xi(y^{-1})=\xi (y\cdot y^{-2})=\sigma(y^2)\xi(y)=\xi(y) \\
  \pi(y)\xi(y)&=\xi(y^\inv \cdot y)=\xi(e) \\		
	\end{aligned}	
\end{equation*}
which produces the following matrices with respect to the basis given above:
  \begin{equation}\label{eq:2}
  \pi(x)=\begin{pmatrix}
             a & 0 \\
             0 & \bar{a}
           \end{pmatrix}, \quad \pi(y)=\begin{pmatrix}
             0 & 1 \\
             1 & 0
           \end{pmatrix}, \quad \pi(z)=\begin{pmatrix}
             0 & a \\
             \bar{a} & 0
           \end{pmatrix}
    \end{equation}
    Corresponding to the characters $(1,v,1)$ and $(1,1,w)$ we obtain the irreducible representations, where we use the isometries $\xi \mapsto (\xi(e), \xi(x))$ and $\xi \mapsto (\xi(e), \xi(y))$ respectively:
    \begin{equation}\label{eq:21}
   \quad \pi(x)=\begin{pmatrix}
             0 & 1 \\
             1 & 0
           \end{pmatrix}, \quad \pi(y)=\begin{pmatrix}
             b & 0 \\
             0 & \bar{b}
           \end{pmatrix}, \quad \pi(z)=\begin{pmatrix}
             0 & \bar{b} \\
            b & 0
           \end{pmatrix},\quad b^2=v,
    \end{equation}        and \begin{equation}\label{eq:22}
  \pi(x)=\begin{pmatrix}
              0 & \bar{c} \\
             c & 0
           \end{pmatrix}, \quad \pi(y)=\begin{pmatrix}
             0 & 1 \\
             1 & 0
           \end{pmatrix}, \quad \pi(z)=\begin{pmatrix}
             c & 0\\
             0 &\bar{c}
           \end{pmatrix},\quad c^2=w.
    \end{equation}

Let $(\pi_{n})_n$ be a sequence in $R_2$ that converges to $\iota$ in $\widehat{G}$. Arguing by symmetry, we  may assume that each $\pi_{n}$ is given by the formulas \eqref{eq:2} corresponding to a sequence of points $u_n \in \T$ with $u_n \notin \{\pm 1\}$. Since $\pi_{n} \to \iota$ it follows from the equation~\eqref{eq:2} that $u_n \to 1$. Again from \eqref{eq:2} we can compute the limits of the sequences $\pi_{n}(x)$ and $\pi_{n}(y)$ in $U(2)$. This gives the representation $\pi \colon G \to U(2)$:
 \begin{equation*}
  \pi(x)=\begin{pmatrix}
             1 & 0 \\
             0 & 1
           \end{pmatrix}, \quad \pi(y)=\begin{pmatrix}
             0 & 1 \\
             1 & 0
           \end{pmatrix}
    \end{equation*}

  It is clear that $\pi$ is a representation of $G$ that factors through the left regular representation of $\Z/2$. Decompose $\pi$ into a direct sum of characters $\pi\sim \iota \oplus \eta$. Then $\eta$ is not equivalent to $\iota$ and $\pi_{n}\to \eta$ in $\widehat{G}$.
\vspace{2mm}
\paragraph{\textbf{4-orbits}}
  Let $\chi=(u,v,w)\in \T^3$ be a character of $N$ with $u,v,w \notin\{\pm 1\}$.
  Its orbit under the action of $G$ consists of four points and $G_\chi=N$.
 Let us compute the representation $\pi=\mathrm{ind}_{N}^{\,G}(\chi)$ of $G$ induced by $\sigma$.
 It acts on the Hilbert space $H_\pi=\{\xi:G \to \C\colon \xi(gh)=\chi(h^{-1})\xi(g),\quad g \in G,h\in N\}$. Thus one can identify $H_\pi$ with $\C^4$ via the isometry $\xi\mapsto (\xi(e),\xi(x),\xi(y),\xi(z))$.
 Using the identities \eqref{eq:useful}, we verify that $\pi(g)\xi=\xi(g^{-1}\cdot)$ is described as follows:
\begin{equation*}
	\begin{aligned}
  		\pi(x)\xi(e)&=\xi(x^\inv)=\xi(x\cdot x^{-2})=\chi(x^2)\xi(x)=u\xi(x) \\
  		\pi(x)\xi(x)&=\xi(e) \\
    	\pi(x)\xi(y)&=\xi(x^\inv y)=\xi (z x^2z^{-2})=\chi(z^2 x^{-2}  )\xi(z)=w\bar{u}\xi(z) \\
		\pi(x)\xi(z)&=\xi(x^\inv z)=\xi (y z^{2})=\chi(z^{-2})\xi(y)=\bar{w}\xi(y)	\\[4mm]	
		\pi(y)\xi(e)&=\xi(y^\inv)=\xi(y \cdot y^{-2})=\chi(y^2)\xi(y)=v \xi(y) \\
 		\pi(y)\xi(x)&=\xi(y^\inv x)=\xi(zx^{2})=\chi(x^{-2})\xi(z)=\bar{u}\xi(z) \\
		\pi(y)\xi(y)&=\xi(e) \\
		\pi(y)\xi(z)&=\xi(y^\inv z)=\xi (x \cdot x^{-2}y^2)=\chi(y^{-2}x^2)\xi(x)=\bar{v}u\xi(x)
	\end{aligned}
\end{equation*}
producing the matrices:
     \begin{equation}\label{eq:4}
     \pi(x)=\begin{psmallmatrix}
       0 & u & 0 & 0 \\[1mm]
       1 & 0 & 0 & 0 \\[1mm]
       0 & 0 & 0 & \bar{u}w \\[1mm]
       0& 0 & \bar{w} & 0
     \end{psmallmatrix},\quad \pi(y)=
     \begin{psmallmatrix}
       0 & 0 & v & 0 \\[1mm]
       0 & 0 & 0 & \bar{u} \\[1mm]
       1 & 0 & 0 & 0 \\[1mm]
       0& u\bar{v} & 0 & 0
     \end{psmallmatrix}
      \end{equation}
     It follows that
     \begin{gather}\label{eq:41}\pi(x^2)=\begin{psmallmatrix}
       u & 0 & 0 & 0 \\[1mm]
       0 & u & 0 & 0 \\[1mm]
       0 & 0 & \bar{u} & 0 \\[1mm]
       0& 0 & 0 & \bar{u}
     \end{psmallmatrix},\quad \pi(y^2)=\begin{psmallmatrix}
       v & 0 & 0 & 0 \\[1mm]
       0 & \bar{v} & 0 & 0 \\[1mm]
       0 & 0 & v & 0 \\[1mm]
       0& 0 & 0 & \bar{v}
     \end{psmallmatrix},  \quad 
     \pi(z^2)=\begin{psmallmatrix}
       w & 0 & 0 & 0 \\[1mm]
       0 & \bar{w} & 0 & 0 \\[1mm]
       0 & 0 & \bar{w}& 0 \\[1mm]
       0& 0 & 0 & w
     \end{psmallmatrix}
     \end{gather}


Let $(\pi_n)_n$ be a sequence in $R_4$ converging to $\iota$ in $\widehat{G}$. Each $\pi_{n}$ is given by the formulas \eqref{eq:4} corresponding to a sequence of points $(u_k,v_k,w_k)\in \T^3$ with $u_k,v_k,w_k \notin\{\pm 1\}$. Since $\pi_{n}\to \iota$ it follows from equation~\eqref{eq:41} that $u_k,v_k,w_k \to 1$. Again from \eqref{eq:4} we can compute the limits of the sequences $\pi_{n}(x)$ and $\pi_{n}(y)$ in the space of unitary operators $U(4)$. This gives the representation $\pi \colon G \to U(4)$:
\begin{equation*}
     \pi(x)=
     \begin{psmallmatrix}
       0 & 1 & 0 & 0 \\[1mm]
       1 & 0 & 0 & 0 \\[1mm]
       0 & 0 & 0 & 1\\[1mm]
       0& 0 & 1 & 0
     \end{psmallmatrix},\quad
     \pi(y)=
     \begin{psmallmatrix}
       0 & 0 & 1 & 0 \\[1mm]
       0 & 0 & 0 & 1 \\[1mm]
       1 & 0 & 0 & 0 \\[1mm]
       0& 1 & 0 & 0
     \end{psmallmatrix}
\end{equation*}
It is clear that $\pi$ is a representation of $G$ that factors through $H=\Z/2 \times \Z/2$. Decompose $\pi$ into a direct sum of characters $\eta_i$. Since $\pi$ is not equivalent to a multiple of the trivial representation, it follows that at least one of these characters is not equivalent to $\iota$.  On the other hand $\pi_{n} \to \eta_i$ in $\widehat{G}$ for all $i$. Combining the above analysis with Corollary~\ref{cor:iota_shielded} we obtain immediately
\begin{corollary}\label{cor:HW_not_connective}
	If $G$ is the Hantzsche-Wendt group, then $I(G)$ is not connective.
\end{corollary}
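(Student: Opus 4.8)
The plan is to reduce the entire statement to the single assertion that the trivial representation $\iota$ is a \emph{shielded} point of $\widehat{G}$ in the sense of Definition~\ref{def:shielded}. Once that is established, Corollary~\ref{cor:iota_shielded} applies verbatim: it already records that $\{\iota\}$ is closed (because $\iota$ is one-dimensional) and converts shieldedness of $\iota$ directly into the non-connectivity of $I(G)$. So the whole burden is to show that whenever a sequence $(\pi_n)_n$ in $\widehat{G}\setminus\{\iota\}$ converges to $\iota$, some subsequence also converges to a point $\eta\neq\iota$.

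First I would invoke Mackey's parametrization (Theorem~\ref{thm:Mackey}) to organize $\widehat{G}$ according to the length of the $G$-orbit of the underlying character of $N\cong\Z^3$, writing $\widehat{G}=R_1\cup R_2\cup R_4$ as in the discussion above. Since the orbits of the $G$-action on $\widehat{N}\cong\T^3$ can only have length $1$, $2$, or $4$, any sequence converging to $\iota$ has a subsequence lying entirely in one stratum $R_\ell$; passing to it, I may assume all the $\pi_n$ lie in a single $R_\ell$ and treat the three cases separately.

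The substance lies in the explicit computation of the induced representations and, crucially, of their limits as the supporting orbit collapses to a fixed point of the action. For $\ell=1$ one observes that a representation lying over a fixed character $(\ep_1,\ep_2,\ep_3)$ with $\ep_i\in\{\pm1\}$ can only converge to $\iota$ if eventually all $\ep_i=1$, whence irreducibility forces $\pi_n=\iota$; thus no sequence in $R_1\setminus\{\iota\}$ converges to $\iota$ at all. For $\ell=2$ and $\ell=4$ the mechanism is uniform: convergence to $\iota$ forces the toral parameter $u_n$ (respectively $(u_n,v_n,w_n)$) to tend to $1$, which one reads off from the diagonal blocks $\pi_n(y^2),\pi_n(z^2)$ in \eqref{eq:2} (respectively from \eqref{eq:41}); the explicit matrices \eqref{eq:2} and \eqref{eq:4}, derived from the coset identities \eqref{eq:useful}, then have honest limits in $U(2)$ and $U(4)$. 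The limit representation $\pi$ factors through the holonomy quotient $\Z/2$ (respectively $H=\Z/2\times\Z/2$); decomposing it into characters and noting that it is not a multiple of $\iota$ produces a nontrivial character $\eta$ with $\pi_n\to\eta$. The symmetric subcases (the characters of type $(1,v,1)$ and $(1,1,w)$, governed by \eqref{eq:21} and \eqref{eq:22}) are handled by the evident symmetry of the relations.

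I expect the main obstacle to be precisely this limit computation. Two points require care: justifying that convergence in the hull-kernel topology of $\widehat{G}$ genuinely forces the parameters to $1$ (so that the matrix entries converge at all), using the restriction-to-$N$ blocks together with the convergence criterion for $\widehat{A}$ recorded before Proposition~\ref{prop:compact-open}; and checking that the resulting block-diagonal limit is not merely a multiple of $\iota$, so that an $\eta\neq\iota$ really appears. Once the three cases are in hand, every sequence in $\widehat{G}\setminus\{\iota\}$ converging to $\iota$ subconverges to some $\eta\neq\iota$, so $\iota$ is shielded, and Corollary~\ref{cor:iota_shielded} finishes the proof.
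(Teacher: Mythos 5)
Your proposal is correct and follows essentially the same route as the paper: the paper's own proof consists exactly of the spectral analysis in Section~\ref{sec:spectrum-HW} (Mackey stratification $\widehat{G}=R_1\cup R_2\cup R_4$, the explicit induced representations \eqref{eq:2}--\eqref{eq:4}, and the limit computations showing every sequence converging to $\iota$ subconverges to some character $\eta\neq\iota$ factoring through the holonomy quotient), followed by an appeal to Corollary~\ref{cor:iota_shielded}. The two points you flag as requiring care are precisely the ones the paper handles via the restriction-to-$N$ matrices \eqref{eq:41} and the observation that the limit representations are regular-type and hence not multiples of $\iota$.
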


It was conjectured in \cite{Dadarlat-almost} that if $G$ is a torsion free discrete amenable group, then
$[[I(G),\K]]\cong KK(I(G),\C)$. We argue now that this conjecture fails for the Hantzsche-Wendt group.
Indeed this follows from the previous corollary in conjunction with the following lemma.
\begin{lemma}\label{lemma:reduction}
 Let $G$ be a residually finite torsion free discrete amenable group which admits a  classifying space with finitely generated K-homology group $K_1(BG)$.
Then $[[I(G),\K]]\cong KK(I(G),\C)$ if and only if $I(G)$ is connective.\end{lemma}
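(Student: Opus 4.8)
The plan is to prove the two implications separately, noting that one direction is formal while the converse carries all of the content and is where the four hypotheses are used. For the implication ``$I(G)$ connective $\Rightarrow$ the natural map is an isomorphism'', observe that since $G$ is countable and amenable, $I(G)$ is separable and nuclear, so connectivity of $I(G)$ is equivalent to homotopy symmetry. Hence \cite[Thm.~3.1]{Dad-Pennig-homotopy-symm} applies with $B=\C$ and yields that the natural map $[[I(G),\C\otimes\K]]\to KK(I(G),\C)$ is an isomorphism; as $\C\otimes\K=\K$, this is exactly the assertion. No use is made here of residual finiteness or of the finiteness of $K_1(BG)$.

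For the converse I would argue by computing the target and transferring the group structure. The natural map $\alpha\colon[[I(G),\K]]\to KK(I(G),\C)$ is a homomorphism of abelian semigroups, where the semigroup structure on the left is induced by $\K\oplus\K\hookrightarrow\K$. First I would pin down the codomain: the split extension $0\to I(G)\to C^*(G)\to\C\to0$ gives $K_*(I(G))\cong\widetilde{K}_*(C^*(G))$; amenability supplies the UCT and, via Baum--Connes, the identification $K_*(C^*(G))\cong K_*(BG)$, while torsion-freeness ensures that $BG$ is a model for the classifying space computing this. The hypothesis that $K_1(BG)$ is finitely generated then makes $K_*(I(G))$ finitely generated, so that $KK(I(G),\C)$ is computed by the UCT with no phantom contribution, i.e.\ $\mathrm{Pext}(K_*(I(G)),\Z)=0$.

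The heart of the argument is to show that, under these hypotheses, bijectivity of $\alpha$ forces connectivity. If $\alpha$ is an isomorphism then, being a bijective semigroup homomorphism onto the group $KK(I(G),\C)$, it exhibits $[[I(G),\K]]$ as a group. The remaining and genuinely delicate point is to upgrade this single-coefficient statement to connectivity, which by Proposition~\ref{products} is equivalent to the vanishing of the obstruction group $[[I(G),\Cuntz{2}\otimes\K]]$. Here the vanishing of $\mathrm{Pext}$ coming from finite generation is what guarantees that the comparison of the coefficient algebras $\C$ and $\Cuntz{2}$ (the latter being $KK$-trivial) is governed entirely by finitely generated $K$-theory and carries no exotic homotopy, so that the group structure detected at $B=\C$ propagates to $B=\Cuntz{2}$. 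Residual finiteness enters to guarantee quasidiagonality of $C^*(G)$, which is what makes the relevant classes representable by cpc asymptotic morphisms, so that these coefficient comparisons are available at the level of $[[\,\cdot\,,\,\cdot\,]]$ rather than only in $KK$. Combining everything, $\alpha$ is an isomorphism precisely when the connectivity obstruction vanishes.

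The step I expect to be the main obstacle is precisely this last upgrade: producing a natural exact sequence that identifies the failure of $\alpha$ to be surjective with the obstruction group $[[I(G),\Cuntz{2}\otimes\K]]$, and verifying that finite generation of $K_*(I(G))$ annihilates the phantom and $\lim^1$ terms that would otherwise obstruct injectivity. In other words, the difficulty is not the formal transfer of the group structure but showing that detecting homotopy symmetry can be done with the single coefficient algebra $\C$; this is exactly the point at which amenability, torsion-freeness, residual finiteness, and the finite generation of $K_1(BG)$ are all needed simultaneously, and it is also the point that fails for the Hantzsche--Wendt group, where $\iota$ is shielded and $I(G)$ is not connective (Corollary~\ref{cor:HW_not_connective}).
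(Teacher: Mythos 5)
Your forward implication (connective $\Rightarrow$ isomorphism) is correct and is exactly the paper's argument: for separable nuclear algebras connectivity equals homotopy symmetry, and then \cite[Thm.~3.1]{Dad-Pennig-homotopy-symm} with $B=\C$ gives the isomorphism. The converse, however, has a genuine gap, and it is precisely the step you flag as ``the main obstacle.'' Reducing connectivity to the vanishing of $[[I(G),\Cuntz{2}\otimes\K]]$ via Proposition~\ref{products} is fine, but your proposed mechanism --- that vanishing of $\mathrm{Pext}$ lets the group structure ``propagate'' from coefficients $\C$ to $\Cuntz{2}$ --- is not an argument: there is no natural comparison map whose analysis turns an isomorphism $[[I(G),\K]]\cong KK(I(G),\C)$ into the vanishing of $[[I(G),\Cuntz{2}\otimes\K]]$ without already knowing connectivity, and nothing in your outline constructs one. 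You have also misidentified the role of residual finiteness: it is not used to get quasidiagonality, but (together with amenability) to get that $C^*(G)$ is \emph{residually finite dimensional}, i.e.\ to produce a separating sequence of finite-dimensional representations.

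The paper's converse avoids any coefficient comparison and is concretely constructive. Pick a separating sequence $\pi_n\colon C^*(G)\to M_{d(n)}(\C)$ (this is where residual finiteness enters) and let $\sigma_n$ be its restriction to $I(G)$. By \cite[Prop.~3.2]{Dadarlat-almost} one has $(\pi_n)_*=d(n)\,\iota_*$ on $K_0(C^*(G))$, so the class $[\sigma_n]$ lies in the subgroup $\mathrm{Ext}^1(K_1(I(G)),\Z)\subset KK(I(G),\C)$ coming from the UCT (available by amenability and Baum--Connes); since $K_1(I(G))\cong K_1(BG)$ is finitely generated, this $\mathrm{Ext}^1$ group is torsion, so after replacing $\pi_n$ by a suitable multiple of itself one gets $[\sigma_n]=0$ in $KK(I(G),\C)$. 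Now the hypothesis is used through its \emph{injectivity}: the natural map being an isomorphism forces $[[\sigma_n]]=0$ in $[[I(G),\K]]$, i.e.\ each $\sigma_n$ is null-homotopic as an asymptotic morphism. A separating sequence of null-homotopic finite-dimensional representations yields exactly the liftable $*$-monomorphism into $\prod_n C L(\mathcal{H})/\bigoplus_n C L(\mathcal{H})$ required by Definition~\ref{def:connectivity}, so $I(G)$ is connective. Your K-theoretic setup (UCT, Baum--Connes, finite generation) is the right preparation, but without the separating family of finite-dimensional representations and the torsion argument killing their $KK$-classes, the proof does not go through.
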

\begin{proof} Suppose first that $[[I(G),\K]]\cong KK(I(G),\C)$. Since $G$ is amenable and residually finite it follows that $C^*(G)$ is residually finite dimensional.
Since $G$ is amenable, $G$ satisfies the Baum-Connes conjecture and $C^*(G)$ satisfies the UCT by results of Higson and Kasparov \cite{Higson-Kasparov01} and Tu \cite{Tu-BC}.
In particular we have a short exact sequence
\[0 \to Ext^1(K_1(C^*(G)),\Z)\to KK(C^*(G),\C)\to \mathrm{Hom}(K_0((C^*(G)),\Z)\to 0\]
 Let $\pi_n:C^*(G)\to M_{d(n)}(\C)$ be a separating sequence of finite dimensional representations.
 The restriction of $\pi_n$ to $I(G)$ will be denoted by~$\sigma_n$. By \cite[Prop.~3.2]{Dadarlat-almost}
 $(\pi_n)_*=d(n)\iota_*: K_0(C^*(G))\to \Z$ and hence $[\sigma_n]\in Ext^1(K_1(I(G)),\Z)\subset KK(I(G),\C)$ is a torsion element since $K_1(I(G))\cong K_1(BG)$ is finitely generated. After replacing $\pi_n$ by a suitable multiple of itself we have arranged that $[\sigma_n]=0$ in $KK(I(G),\C)$ and hence $[[\sigma_n]]=0$ in $[[I(G),\K]]$.
 Since the sequence $(\sigma_n)$ separates the elements of $I(G)$ it follows that $I(G)$ is connective.

 The converse is contained in the main result of \cite{Dad-Pennig-homotopy-symm} which shows that if $A$ is a separable nuclear connective $C^*$-algebra, then $[[A,\K]]\cong KK(A,\C)$.
\end{proof}

\subsection{Crystallographic groups with cyclic holonomy}
In this section we are going to show that torsion free crystallographic groups with cyclic holonomy are connective. Apart from this we isolate a lemma, which proves that $I(G)$ for a group $G$ which is a finite extension of a connective group always contains a ``big'' connective ideal. In particular, the lemma also holds for the Hantzsche-Wendt group.

The proof of both results uses some tools from the index theory of $C^*$-subalgebras. A reference is \cite{book:WatataniIndex}.
Let $\Gamma$ and $G$ be discrete groups and let $H$ be a finite group. Suppose that they fit into an exact sequence
\[
	\xymatrix{
		1 \ar[r] & \Gamma \ar[r] & G \ar[r]^-{q} & H \ar[r] & 1\ .
	}
\]
Let $E \colon C^*(G) \to C^*(\Gamma)$ be the faithful conditional expectation \cite[Ex.~1.2.3]{book:WatataniIndex} given on group elements by
\[
	E(g) = \begin{cases}
		g & \text{if } g \in \Gamma \\
		0 & \text{else}
	\end{cases} \ .
\]
Choose a lift $g_h \in G$ for each $h \in H$. The pairs $(g_h^{-1}, g_h)$ form a quasi-basis in the sense of \cite[Def.~1.2.2]{book:WatataniIndex}.
Let $\mathcal{E} = C^*(G)$ considered as a right Hilbert $C^*(\Gamma)$-module, where the right action is induced by the inclusion $C^*(\Gamma) \to C^*(G)$ and the inner product is given by $\langle a,b \rangle = E(a^*b)$ \cite[Sec.~2.1]{book:WatataniIndex}. Note that $\mathcal{E}$ is complete \cite[Prop.~2.1.5]{book:WatataniIndex}. The quasi-basis induces an isometric isomorphism of right Hilbert $C^*(\Gamma)$-modules $u \colon \mathcal{E} \to \ell^2(H) \otimes C^*(\Gamma)$ with
\[
	u(a) = \sum_h \delta_h \otimes E(g_ha)
\]
and inverse $u^* \colon \ell^2(H) \otimes C^*(\Gamma) \to \mathcal{E}$ with $u^*(\delta_h \otimes b) = g_h^{-1}\,b$. Let $\mathcal{L}_{C^*(\Gamma)}(\mathcal{E})$ be the bounded adjointable operators on $\mathcal{E}$ and denote by $\mathcal{K}_{C^*(\Gamma)}(\mathcal{E})$ the compact ones. Then we have $\mathcal{L}_{C^*(\Gamma)}(\mathcal{E}) \cong \mathcal{K}_{C^*(\Gamma)}(\mathcal{E}) \cong \K(\ell^2(H)) \otimes C^*(\Gamma)$. The left multiplication of $C^*(G)$ on $\mathcal{E}$ induces a $*$-homomorphism
\[
	\psi \colon C^*(G) \to \K(\ell^2(H)) \otimes C^*(\Gamma)
\]
with matrix entries $\psi_{h',h}(a) = E(g_{h'}\,a\,g_h^{-1})$. Suppose we have $a \in C^*(G)$ with $\psi(a) = 0$. Then
\[
	a = \frac{1}{\lvert H \rvert}\sum_{h,h'} g_{h'}^{-1}E(g_{h'}\,a\,g_h^{-1})g_h = 0\ .
\]
Hence, $\psi$ is injective.

\begin{lemma}
Let $\Gamma$ be a connective group and let $H$ be a finite group. Suppose that the group $G$ fits into a short exact sequence of the form
\[
	\xymatrix{
		1 \ar[r] & \Gamma \ar[r] & G \ar[r]^-{q} & H \ar[r] & 1\ .
	}
\]
Then $I(G,H) = \ker(I(q) \colon I(G) \to I(H))$ is connective as well.	
\end{lemma}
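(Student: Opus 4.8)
The plan is to exploit the injective $*$-homomorphism $\psi\colon C^*(G)\to\K(\ell^2(H))\otimes C^*(\Gamma)$ constructed above and to show that it carries the ideal $I(G,H)$ into the connective algebra $\K(\ell^2(H))\otimes I(\Gamma)$. First I would reinterpret the ideal. Since $\iota_G=\iota_H\circ C^*(q)$, every element of $\ker C^*(q)$ lies in $I(G)=\ker\iota_G$, so $I(G,H)=\ker\bigl(I(q)\colon I(G)\to I(H)\bigr)=\ker\bigl(C^*(q)\colon C^*(G)\to C^*(H)\bigr)$. Thus it suffices to understand $\ker C^*(q)$ through $\psi$, where $C^*(q)\colon C^*(G)\to C^*(H)$ is the surjection induced by $q$.

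The key step is a short computation with the matrix coefficients $\psi_{h',h}(a)=E(g_{h'}\,a\,g_h^{-1})$. Let $\iota_\Gamma\colon C^*(\Gamma)\to\C$ be the trivial character, so that $I(\Gamma)=\ker\iota_\Gamma$, and consider the composition $(\mathrm{id}\otimes\iota_\Gamma)\circ\psi\colon C^*(G)\to\K(\ell^2(H))$. For a group element $g\in G$ the $(h',h)$-entry is $\iota_\Gamma\bigl(E(g_{h'}\,g\,g_h^{-1})\bigr)$, which equals $1$ when $g_{h'}\,g\,g_h^{-1}\in\Gamma$, i.e.\ when $h'=h\,q(g)^{-1}$, and $0$ otherwise. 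Hence $(\mathrm{id}\otimes\iota_\Gamma)\circ\psi=\rho\circ C^*(q)$, where $\rho\colon C^*(H)\to\K(\ell^2(H))$ is the regular representation of the finite group $H$. Since $\rho$ is faithful and $\ker(\mathrm{id}\otimes\iota_\Gamma)=\K(\ell^2(H))\otimes I(\Gamma)=M_{|H|}(I(\Gamma))$, the injectivity of $\psi$ shows that $\psi$ restricts to a $*$-isomorphism of $I(G,H)=\ker C^*(q)$ onto a $C^*$-subalgebra of $M_{|H|}(I(\Gamma))$.

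It then remains to invoke permanence properties. As $I(\Gamma)$ is connective by hypothesis and $H$ is finite, the matrix amplification $M_{|H|}(I(\Gamma))$ is again connective — a special case of \cite[Thm.~3.3]{Dad-Pennig-homotopy-symm}, and in any case immediate from Definition~\ref{def:connectivity} by amplifying the defining monomorphism together with its cpc lift. Finally, connectivity passes to $C^*$-subalgebras, since restricting the defining monomorphism $\Phi$ and its cpc lift $\varphi$ to a subalgebra again yields a monomorphism admitting a cpc lift. Applying this to $\psi(I(G,H))\subseteq M_{|H|}(I(\Gamma))$ and transporting back along the $*$-isomorphism $I(G,H)\cong\psi(I(G,H))$ shows that $I(G,H)$ is connective.

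The main obstacle is the identification in the second paragraph: recognizing that composing $\psi$ with the trivial character of $\Gamma$ collapses it to the regular representation of $H$ precomposed with $C^*(q)$, so that $\ker C^*(q)$ is exactly the preimage under $\psi$ of $M_{|H|}(I(\Gamma))$. Once this is in place the remainder is a soft application of the permanence of connectivity (matrix stability and passage to subalgebras), and in particular no nuclearity or separability of $\Gamma$ is needed beyond what is implicit in the phrase ``$\Gamma$ is a connective group.''
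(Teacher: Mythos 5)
Your proof is correct, and its skeleton matches the paper's: both arguments push $I(G,H)$ through the injective $*$-homomorphism $\psi$ into the connective ideal $\ker(\mathrm{id}\otimes\iota) = \K(\ell^2(H))\otimes I(\Gamma)$ and then invoke matrix stability of connectivity together with passage to $C^*$-subalgebras. Where you genuinely differ is in the mechanism of the key identification. The paper never brings in $C^*(q)$: it works with the ideal $J$ of $C^*(G)$ generated by $I(\Gamma)$, observes that $\psi$ acts diagonally on $C^*(\Gamma)$ (namely $\psi_{h',h}(b) = \delta_{h',h}\, g_h b g_h^{-1}$), so that $\psi(J)\subseteq \K(\ell^2(H))\otimes I(\Gamma)$ and $J$ is connective, and then proves $J = I(G,H)$ via the quasi-basis expansion $x = \sum_h E(x g_h^{-1}) g_h$, checking that $E(x g_h^{-1})\in I(\Gamma)$ for every $x \in I(G,H)$. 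You instead identify $I(G,H)$ with $\ker C^*(q)$ and compute the composite $(\mathrm{id}\otimes\iota)\circ\psi$ on group elements, recognizing it as $\rho\circ C^*(q)$ with $\rho$ the (right) regular representation of $H$; faithfulness of $\rho$ on $\C[H]$ then gives $\psi^{-1}\bigl(\K(\ell^2(H))\otimes I(\Gamma)\bigr) = \ker C^*(q) = I(G,H)$ in one stroke, with no separate containment-plus-equality argument. Both computations are sound and of comparable length. Your route pins down the exact preimage of the connective ideal, which is slightly cleaner logically; the paper's route yields the extra structural fact that $I(G,H)$ is precisely the ideal of $C^*(G)$ generated by $I(\Gamma)$, which is of independent interest. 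The permanence properties you invoke (connectivity of $M_{|H|}(I(\Gamma))$ and passage to $C^*$-subalgebras) are exactly the ones the paper uses, so no new input is needed there.
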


\begin{proof}
Let $\iota \colon C^*(\Gamma) \to \C$ be the trivial representation and let $\psi$ be the injective $*$-homomorphism constructed above. For all $b \in C^*(\Gamma) \subset C^*(G)$ we have $\psi_{h',h}(b) = \delta_{h',h}\,g_h\,b\,g_h^{-1}$. In particular, $\psi$ embeds the ideal $J$ generated by $I(\Gamma)$ into $\ker (\text{id} \otimes \iota) = \K(\ell^2(H)) \otimes I(\Gamma)$, which is connective. Hence $J$ is connective as well. It is clear that $J \subseteq I(G,H)$. Let $x \in I(G,H)$. By the property of the quasi-basis
\[
	0 = q(x) = \sum_{h\in H} q(E(x\,g_h^{-1}))\,h \quad \Rightarrow \quad q(E(x\,g_h^{-1})) = 0 \quad \forall h \in H \ .
\]
Since $I(G,H) \cap C^*(\Gamma) = I(\Gamma)$ we obtain that $E(x\,g_h^{-1}) \in I(\Gamma)$ for all $h \in H$ and therefore $x = \sum_{h} E(x\,g_h^{-1}))\,g_h \in J$, hence $J = I(G,H)$.
\end{proof}

The proof of the second result uses an induction over the rank of the free abelian subgroup based on the following observation.

\begin{lemma} \label{lem:fin_ext}
Let $m > 1$ and let $\Gamma$ and $G$ be countable discrete groups that fit into the following short exact sequence
\[
	\xymatrix{
		1 \ar[r] & \Gamma \ar[r] & G \ar[r]^-{\pi} & \Z/m\Z \ar[r] & 1\ .
	}
\]
Suppose that $\Gamma$ is connective and there are group homomorphisms $\varphi \colon G \to \Z$ and $q \colon \Z \to \Z/m\Z$, such that $\pi = q \circ \varphi$. Then $G$ is connective as well.
\end{lemma}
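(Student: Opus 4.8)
The plan is to exploit the factorization $\pi = q\circ\varphi$ in order to replace the finite quotient $\Z/m\Z$ -- whose augmentation ideal $I(\Z/m\Z)\cong\C^{m-1}$ contains projections and is therefore \emph{not} connective -- by the infinite cyclic group $\Z$, whose augmentation ideal $I(\Z)\cong C_0(\T\setminus\{1\})\cong C_0(\R)$ \emph{is} connective. This is the conceptual heart of the matter: a naive attempt to conclude connectivity of $I(G)$ from the extension $0\to I(G,\Z/m\Z)\to I(G)\to I(\Z/m\Z)\to 0$ (whose kernel is connective by the preceding lemma) is bound to fail, since the quotient $I(\Z/m\Z)$ is not connective; the homomorphism $\varphi$ is precisely the device that remedies this defect.

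First I would reduce to the case where $\varphi$ is surjective, replacing $\Z$ by the subgroup $\varphi(G)\cong\Z$ (nonzero because $q\circ\varphi=\pi$ is onto $\Z/m\Z$ and $m>1$) and $q$ by its restriction. Then $\varphi$ induces a surjection $C^*(\varphi)\colon C^*(G)\to C^*(\Z)=C(\T)$, and since the trivial representation factors as $\iota_G=\iota_\Z\circ C^*(\varphi)$, this restricts to a surjection of augmentation ideals $I(\varphi)\colon I(G)\to I(\Z)\cong C_0(\T\setminus\{1\})$. Setting $J=\ker I(\varphi)$ produces the short exact sequence
\[
   0\longrightarrow J\longrightarrow I(G)\xrightarrow{I(\varphi)} I(\Z)\longrightarrow 0 .
\]
Here the quotient $I(\Z)\cong C_0(\R)$ is a separable continuous field of nuclear $C^*$-algebras (with fibre $\C$) over $\R$, a locally compact metrizable space with no compact open subsets, and hence is connective by Corollary~\ref{cor:fields}.

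The key step, and the one I expect to carry the argument, is to show that $J$ is connective, where the factorization is used a second time. From $\pi=q\circ\varphi$ one obtains $I(\pi)=I(q)\circ I(\varphi)$ on augmentation ideals, whence
\[
   J=\ker I(\varphi)\subseteq\ker I(\pi)=I(G,\Z/m\Z).
\]
Applying the preceding lemma to the extension $1\to\Gamma\to G\xrightarrow{\pi}\Z/m\Z\to 1$ (whose kernel $\Gamma$ is connective), the ideal $I(G,\Z/m\Z)=\ker I(\pi)$ is connective. Since $J$ is a closed two-sided ideal of $I(G)$ contained in $I(G,\Z/m\Z)$, it is in particular a closed (nuclear) subalgebra of the connective $C^*$-algebra $I(G,\Z/m\Z)$, and connectivity passes to subalgebras; therefore $J$ is connective.

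Finally, $J$ and $I(\Z)$ are separable nuclear connective $C^*$-algebras, so Theorem~\ref{thm:extensions} applied to the displayed short exact sequence yields that $I(G)$ is connective, i.e.\ $G$ is connective. The only genuinely nontrivial insight is the one highlighted above -- using $\varphi$ to trade the non-connective finite quotient for the connective suspension $I(\Z)$, and observing that the resulting kernel $J$ is trapped inside the connective ideal $I(G,\Z/m\Z)$ furnished by the previous lemma. The remaining verifications (surjectivity of $I(\varphi)$, the inclusion $J\subseteq I(G,\Z/m\Z)$, and nuclearity, which holds since $C^*(G)$ is nuclear in the intended application to crystallographic groups) are routine.
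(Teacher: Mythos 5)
Your proof is correct, but it takes a genuinely different route from the paper's. The paper argues by direct construction: it composes the quasi-basis embedding $\psi\colon C^*(G)\hookrightarrow M_m(\C)\otimes C^*(\Gamma)$ with $\mathrm{id}\otimes\iota$ to get a representation $\rho$ of $G$ factoring through $C^*(\Z/m\Z)$, hence---by the hypothesis $\pi=q\circ\varphi$---through $C^*(\Z)$; since $\widehat{\Z}\cong\T$ is path-connected, each character occurring in $\rho$ can be joined to the trivial character through representations, and splicing these paths with $(\mathrm{id}_{M_m(\C)}\otimes H_n)\circ\psi$, where $H_n$ is a matrix-valued homotopy witnessing the connectivity of $\Gamma$, exhibits the connectivity of $G$ by hand. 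You instead use the factorization to trade the non-connective quotient $I(\Z/m\Z)$ for the connective quotient $I(\Z)\cong C_0(\R)$, trap $J=\ker I(\varphi)$ inside the connective ideal $I(G,\Z/m\Z)$ furnished by the preceding lemma (so that $J$ is connective by passage to $C^*$-subalgebras), and then invoke Theorem~\ref{thm:extensions}. Both proofs exploit $\varphi$ for the same underlying reason (characters pulled back from $\Z$ can be deformed to the trivial one; equivalently $I(\Z)$ is connective), but yours is softer: it reuses the preceding lemma as a black box and replaces the explicit concatenation of homotopies by the extension theorem, at the cost of importing that theorem's heavy machinery (absorbing extensions, vanishing of $\mathrm{Ext}$ after tensoring with $\Cuntz{2}$), whereas the paper's argument is self-contained at this point and stays entirely at the level of explicit finite-dimensional homotopies. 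Your reduction to surjective $\varphi$, the surjectivity of $I(\varphi)$, and the inclusion $\ker I(\varphi)\subseteq\ker I(\pi)$ are all fine.

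One point you should tighten: Theorem~\ref{thm:extensions} requires an exact sequence of separable \emph{nuclear} $C^*$-algebras, and you defer this to ``the intended application to crystallographic groups.'' No deferral is needed. In this paper a connective group is, by convention, a discrete \emph{amenable} group with connective augmentation ideal; thus $\Gamma$ is amenable, so $G$ is amenable (being a finite extension of an amenable group), and therefore $C^*(G)$, $I(G)$, $J$ and $I(\Z)$ are all separable and nuclear. With that observation your argument proves the lemma in its full stated generality. (Note that the paper's own proof implicitly uses the same convention when it takes matrix-valued witnesses of the connectivity of $\Gamma$.)
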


\begin{proof}
Let $\psi \colon C^*(G) \to \K(\ell^2(H)) \otimes C^*(\Gamma)$ be the injective $*$-homomorphism constructed above and let $\iota \colon C^*(\Gamma) \to \C$ be the trivial representation. Observe that $\rho =(\text{id} \otimes \iota) \circ \psi$ satisfies
\begin{align*}
	\rho(g\,\gamma)_{h',h} &= \iota(E(g_{h'}g\,\gamma\,g_h^{-1})) = \iota(E(g_{h'}g\,g_h^{-1})\,g_h\,\gamma\,g_h^{-1}) = \iota(E(g_{h'}g\,g_h^{-1}))
\end{align*}
for all $g \in G$ and $\gamma \in \Gamma$. In particular, $\rho$ factors through the $*$-homomorphism $C^*(G) \to C^*(\Z/m\Z)$ induced by $\pi$. By assumption this decomposes as
\[
\xymatrix{
	C^*(G) \ar[r]^-{\varphi} & C^*(\Z) \ar[r]^-{q} & C^*(\Z/m\Z)\ .
}
\]
Altogether we obtain that $\rho$ decomposes into a direct sum of one-dimensional representations, each of which is homotopic through representations to the trivial one. Hence, to show that $G$ is connective, it suffices to construct a path through discrete asymptotic morphisms connecting a faithful morphism with a direct sum of copies of $\rho$.

Choose a path witnessing the connectivity of $\Gamma$, i.e.\ a discrete asymptotic morphism
\[
	H_n \colon C^*(\Gamma) \to C([0,1]) \otimes M_{n}(\C)
\]
such that for $H_n^{(t)} = \text{ev}_t \circ H_n \colon C^*(\Gamma) \to M_n(\C)$ we have that $H_n^{(0)}$ is faithful and $H_n^{(1)}$ is a multiple of $\iota$. Then $(\text{id}_{M_m(\C)} \otimes H_n) \circ \psi$ has the desired properties. Hence, $G$ is connective.
\end{proof}

We need the following elementary fact:

\begin{lemma} \label{lem:gen_lift}
Let $a, b > 1$ be integers and consider the exact sequence
\[
\xymatrix{
	0 \ar[r] & \Z/a\Z \ar[r]^{\cdot b} & \Z/ab\Z \ar[r]^-{\pi} & \Z/b\Z \ar[r] & 0
}
\]
with $\pi(x) = x \mod b$. Any generator of $\Z/b\Z$ lifts to a generator of $\Z/ab\Z$.
\end{lemma}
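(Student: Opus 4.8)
The plan is to translate the statement into elementary number theory. A generator of the additive group $\Z/n\Z$ is precisely a residue class coprime to $n$, so a generator of $\Z/b\Z$ is represented by an integer $g$ with $\gcd(g,b)=1$, and a generator of $\Z/ab\Z$ is a class coprime to $ab$. Since $\gcd(\tilde g, ab)=1$ holds if and only if $\gcd(\tilde g, a)=1$ and $\gcd(\tilde g, b)=1$, and since any lift $\tilde g \equiv g \pmod b$ automatically satisfies $\gcd(\tilde g, b)=\gcd(g,b)=1$, the whole problem reduces to the following: given $g$ with $\gcd(g,b)=1$, produce an integer $\tilde g \equiv g \pmod b$ with $\gcd(\tilde g, a)=1$. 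Its class in $\Z/ab\Z$ is then the desired generator, and one has $\pi(\tilde g) = \tilde g \bmod b = g$.

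To produce such a $\tilde g$ I would separate the prime divisors of $a$ into those that divide $b$ and those that do not. For a prime $p \mid a$ with $p \mid b$, the congruence $\tilde g \equiv g \pmod b$ already forces $\tilde g \equiv g \not\equiv 0 \pmod p$ (using $p \mid b$ together with $\gcd(g,b)=1$), so these primes are harmless for any lift. The only genuine constraints come from the primes dividing $a$ but not $b$. Let $c$ denote the product of the distinct such primes; then $\gcd(b,c)=1$ by construction, so the Chinese Remainder Theorem lets me choose an integer $\tilde g$ with $\tilde g \equiv g \pmod b$ and $\tilde g \equiv 1 \pmod c$. The second congruence guarantees $\tilde g \not\equiv 0 \pmod p$ for every prime $p \mid c$.

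It then remains to verify $\gcd(\tilde g, a)=1$ by checking that no prime $p \mid a$ divides $\tilde g$: primes dividing $b$ are excluded by the observation above, while primes not dividing $b$ are exactly the divisors of $c$, which are excluded by $\tilde g \equiv 1 \pmod c$. This finishes the argument. The statement is essentially routine, and I expect the only point requiring care to be the case where $a$ and $b$ are \emph{not} coprime: if they were coprime, a single application of CRT with $g \pmod b$ and $1 \pmod a$ would suffice, whereas in general one must first isolate the shared prime factors and observe that they are automatically avoided, so that CRT need only be applied to the coprime moduli $b$ and $c$.
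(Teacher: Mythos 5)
Your proof is correct, and it takes a somewhat different route from the paper's. You first observe that coprimality to $b$ is automatic for any lift of $g$, so the problem reduces to avoiding the primes dividing $a$ but not $b$; you then invoke the Chinese Remainder Theorem for the coprime moduli $b$ and $c$ (the product of those primes) to produce a lift with $\tilde g \equiv g \pmod{b}$ and $\tilde g \equiv 1 \pmod{c}$. The paper instead partitions the prime divisors of $ab$ according to whether they divide the chosen representative $y$ of the generator, and writes down an explicit lift $x = y + p_1\cdots p_r\, b$, where $p_1,\dots,p_r$ are the primes of $ab$ not dividing $y$; coprimality to $ab$ is then checked prime by prime, using $\gcd(y,b)=1$ to see that the primes dividing $y$ divide neither $b$ nor $p_1\cdots p_r$. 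The two arguments are equally elementary but package the difficulty differently: your reduction makes transparent that the shared prime factors of $a$ and $b$ are harmless and that the entire content is one CRT application to coprime moduli, whereas the paper's proof is self-contained (no appeal to CRT) and produces a concrete formula for the lift --- in effect it hand-codes the CRT step. Both approaches correctly address the crucial case $\gcd(a,b)>1$, which, as you note, is exactly where the naive one-step CRT argument with moduli $a$ and $b$ would fail.
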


\begin{proof}
Let $\bar{y} \in \Z/b\Z$ be a generator and let $y \in \{0,\dots, b-1\}$ be a representative. Let $p_1, \dots, p_s$ be the disctinct prime factors of $ab$, such that $p_1, \dots, p_r$ for $r \leq s$ are the ones not dividing $y$ and $p_{r+1}, \dots, p_s$ divide $y$. Since $\gcd(y,b) = 1$, the primes $p_{r+1}, \dots, p_s$ do not divide $b$. Let $x = y + p_1\dots p_r\, b$. We have for $i \in \{1, \dots, r\}$ and $j \in \{r+1, \dots, s\}$
\begin{align*}
	x &\equiv y \not\equiv 0 \mod p_i \ ,\\
	x &\equiv p_1\dots p_r\,b \not\equiv 0 \mod p_j\ .
\end{align*}
Hence $\gcd(x,ab) = 1$ and $x \in \Z/ab\Z$ is a generator with $\pi(x) = \bar{y}$.
\end{proof}

To start the induction we need the following lemma.

\begin{lemma} \label{lem:ind_start}
Let $G$ be a countable torsion free discrete group, which fits into an exact sequence
\[
\xymatrix{
	0 \ar[r] & \Z \ar[r] & G \ar[r] & \Z/m\Z \ar[r] & 0
}
\]
Then $G$ is isomorphic to $\Z$, hence in particular connective.
\end{lemma}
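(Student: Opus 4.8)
The statement is purely group-theoretic together with the standard fact that $\Z$ is connective, so the plan is to first prove $G \cong \Z$ and then invoke connectivity of $\Z$. The whole argument hinges on the conjugation action of $G$ on its normal subgroup $\Z = \langle t\rangle$. Since $\Aut{\Z} \cong \{\pm 1\}$ and $\langle t\rangle$ acts trivially on itself, this action factors through the quotient, giving a homomorphism $\alpha \colon \Z/m\Z \to \{\pm 1\}$. First I would fix a lift $s \in G$ of a generator of $\Z/m\Z$; then $G = \langle t, s\rangle$, we have $s\,t\,s^{-1} = t^{\epsilon}$ with $\epsilon = \alpha(\text{generator}) \in \{\pm 1\}$, and $s^m \in \langle t\rangle$, say $s^m = t^k$ for some $k \in \Z$. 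The proof then splits according to the value of $\epsilon$.

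If $\epsilon = +1$, then $\langle t\rangle$ is central, so the two generators $t$ and $s$ commute and $G$ is abelian. Being finitely generated, torsion free, and containing $\Z$ with finite index $m$, the group $G$ has rank one, whence $G \cong \Z$ by the structure theorem for finitely generated abelian groups. This case is routine.

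The crux, which I expect to be the only real obstacle, is ruling out $\epsilon = -1$ by producing torsion. The key one-line computation is that conjugating $s^m = t^k$ by $s$ yields $s^m = s\,s^m\,s^{-1} = s\,t^k\,s^{-1} = t^{-k}$, so $t^{2k} = 1$; since $\langle t\rangle \cong \Z$ is torsion free this forces $k = 0$ and hence $s^m = 1$. But the image of $s$ generates $\Z/m\Z$, so $s$ has order exactly $m > 1$, a nontrivial torsion element, contradicting the hypothesis that $G$ is torsion free. Therefore $\epsilon = -1$ cannot occur, we always land in the abelian case, and $G \cong \Z$. Finally, $\Z$ is connective because $I(\Z) \cong C_0(\R) = S\C$ has primitive spectrum $\R$, which is Hausdorff and has no nonempty compact open subset, so Proposition~\ref{prop:compact-open}(ii) applies.
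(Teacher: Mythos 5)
Your proof is correct and takes essentially the same route as the paper: both arguments use the conjugation action of a lift $s$ of the generator on $\Z=\langle t\rangle$, rule out the inversion case $sts^{-1}=t^{-1}$ by producing a torsion element (the paper derives $x^{2m}=e$ directly, you derive $s^m=e$), and then conclude in the commuting case. The only difference is cosmetic: where you invoke the structure theorem and a rank/finite-index argument for finitely generated torsion-free abelian groups (and justify connectivity of $\Z$ via Proposition~\ref{prop:compact-open}(ii)), the paper instead writes down an explicit isomorphism $x^k t^\ell \mapsto kn+\ell m$ after reducing $x^m=t^n$ to the case $\gcd(m,n)=1$.
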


\begin{proof}
	This can be proven by calculating $H^2(\Z/m\Z,\Z)$ for all $\Z/m\Z$-module structures on $\Z$, but we give a direct argument here.
	
	Let $x \in G$ be a lift of $1 \in \Z/m\Z$. Then $G$ is generated by $x$ and $\Z$. Moreover, $x^m \in \Z \cap Z(G)$, where $Z(G)$ denotes the center of $G$. We have $\Aut{\Z} \cong GL_1(\Z) \cong \Z/2\Z$. If $t \in \Z$ denotes a generator, we therefore can only have $xtx^{-1} = t^{-1}$ or $xt = tx$. Suppose the first is true, then
	\[
		x^m = x\,x^m\,x^{-1} = x^{-m} \quad \Rightarrow \quad x^{2m} = e
	\]
	contradicting that $G$ is torsion free. Thus, $t$ and $x$ commute and $x^m = t^n$ for some $n \in \Z$. Without loss of generality we can assume $\gcd(m,n) = 1$. Indeed, if $m = m'\,\ell$ and $n = n'\,\ell$ with $\ell > 1$, then $(x^{m'}t^{-n'})^{\ell} = e$ and therefore $x^{m'} = t^{n'}$ also holds in $G$. Consider
	\[
		\alpha \colon G \to \Z \quad ; \quad x^kt^{\ell} \mapsto k\,n + \ell\,m\ .
	\]
	This is a well-defined group homomorphism, which is easily seen to be bijective as a consequence of $\gcd(m,n) = 1$.
\end{proof}

\begin{theorem} \label{thm:cyclic_holonomy}
Let $G$ be a countable, torsion free, discrete group, which fits into an exact sequence of the form
\[
	\xymatrix{
		0 \ar[r] & \Z^n \ar[r] & G \ar[r]^-{\pi} & \Z/m\Z \ar[r] & 0
	}
\]
for some $n,m \in \N$. Then $G$ is connective.
\end{theorem}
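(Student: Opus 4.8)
The plan is to argue by a lexicographic induction on the pair $(n,m)$, the rank of the free abelian normal subgroup and the order of the holonomy. The base cases are $m=1$, where $G=\Z^n$ is connective because the spectrum of $I(\Z^n)$ is $\mathbb{T}^n\setminus\{1\}$, a locally compact metrizable space with no non-empty compact open subset, so that Corollary~\ref{cor:fields} applies; and $n=1$, which is Lemma~\ref{lem:ind_start} together with the connectivity of $\Z$ (its augmentation ideal $I(\Z)\cong C_0(\mathbb{T}\setminus\{1\})$ is a suspension). For the inductive step I would keep the quotient finite and cyclic throughout, so that the conditional-expectation machinery behind Lemma~\ref{lem:fin_ext} is available.

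The structural fact I would establish first is that torsion-freeness pins down the holonomy. Fix a lift $t\in G$ of a generator of $\Z/m\Z$ and let $\tau\in GL_n(\Z)$ be conjugation by $t$. Since $t$ commutes with $t^m$ and $t^m\in\Z^n$, the element $t^m$ is fixed by $\tau$, and it is nonzero, as otherwise $t$ would be torsion. Writing $t^m=v_0^{\ell}$ with $v_0$ a primitive vector of the (pure) fixed sublattice $\Z^n\cap\ker(\tau-1)$, the abelian subgroup $\langle t,v_0\rangle$ is $\Z^2/\langle(m,-\ell)\rangle$, which is torsion-free only if $\gcd(m,\ell)=1$. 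Two consequences are crucial: the fixed sublattice is nonzero, so $\mathrm{Hom}(G,\Z)\neq 0$, and every homomorphism $\varphi\colon G\to\Z$ satisfies $\varphi(t)=\ell s$, $\varphi(v_0)=m s$ for some integer $s$ (because $m\,\varphi(t)=\varphi(t^m)=\ell\,\varphi(v_0)$ and $\gcd(m,\ell)=1$).

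The main move is to peel a divisor off the holonomy. Using a $\tau$-invariant functional whose value on $v_0$ is chosen coprime to a divisor $b\mid m$, I would produce $\varphi\colon G\to\Z$ and $q\colon\Z\to\Z/b\Z$ with $q\circ\varphi$ equal to the composite $G\to\Z/m\Z\to\Z/b\Z$. Then $\Gamma=\ker(G\to\Z/b\Z)$ is a normal, finite-index, torsion-free subgroup containing $\Z^n$, with cyclic holonomy of order $m/b<m$; it is connective by the induction hypothesis, and Lemma~\ref{lem:fin_ext} upgrades this to connectivity of $G$. Lemma~\ref{lem:gen_lift} is what makes this step legitimate: it resolves the tension between the divisibility condition $\varphi(\Z^n)\subseteq b\Z$ (which tends to force factors of $b$ into $\varphi$) and the coprimality condition $\gcd(\varphi(t),b)=1$, by guaranteeing that a generator of the smaller holonomy $\Z/b\Z$ lifts to a generator of $\Z/m\Z$. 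When no prime factor of $m$ can be peeled in this way, which can happen when $v_0$ is non-primitive relative to the invariant pairing in a way that shares a factor with $m$, I would instead reduce the rank: split off the central copy of $\Z$ generated by $v_0$ by passing to a torsion-free rank-$(n-1)$ quotient or complementary invariant sublattice, and invoke the rank induction.

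The step I expect to be the main obstacle is precisely this arithmetic matching. Getting $q\circ\varphi$ to reproduce the quotient map requires simultaneously arranging integrality of $\varphi(t)$, the divisibility $\varphi(\Z^n)\subseteq b\Z$, and the coprimality $\gcd(\varphi(t),b)=1$; the fixed-vector normal form $t^m=v_0^{\ell}$ with $\gcd(m,\ell)=1$ and Lemma~\ref{lem:gen_lift} are exactly the tools that make these compatible. The second delicate point is the rank-reduction case: one must verify that the quotient splitting off $\langle v_0\rangle$ can be taken so as to remain torsion-free (a naive quotient by $\langle v_0\rangle$ introduces torsion, since the image of $t$ acquires finite order), and that each inductive move strictly decreases either $n$ or $m$, guaranteeing termination.
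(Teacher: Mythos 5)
Your structural analysis (the normal form $t^m=v_0^{\ell}$ with $v_0$ primitive in the fixed lattice and $\gcd(m,\ell)=1$, forced by torsion freeness) is correct, and your first branch is sound: if for some divisor $b>1$ of $m$ there is $\varphi\colon G\to\Z$ with $\varphi(\Z^n)\subseteq b\Z$ and $\gcd(\varphi(t),b)=1$, then the composite $G\to\Z/m\Z\to\Z/b\Z$ factors through $\Z$, its kernel $\Gamma$ is torsion free with lattice $\Z^n$ and holonomy of order $m/b$, and Lemma~\ref{lem:fin_ext} plus induction on $m$ yields connectivity of $G$. The genuine gap is the fallback branch, which by your own account is needed whenever this arithmetic matching fails. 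Neither construction you propose there works: the quotient $G/\langle v_0\rangle$ always has torsion, since the image of $t$ satisfies $\bar{t}^{\,m}=\bar{v}_0^{\,\ell}=e$ (you concede this and leave it as something to be ``verified'' away); and a $\tau$-invariant complement $L'$ of $\Z v_0$ in $\Z^n$ need not exist integrally --- moreover, even when it does exist, the subgroup $\langle L',t\rangle$ contains $t^m=v_0^{\ell}$ with $\ell\neq 0$, so it has finite index in $G$ and abelian rank $n$, not $n-1$, leaving nothing to which the rank induction could be applied. Your appeal to Lemma~\ref{lem:gen_lift} cannot repair this either: that lemma lifts generators along $\Z/ab\Z\to\Z/b\Z$ and says nothing about the existence of an invariant functional with the required divisibility and coprimality properties. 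So the case your induction may land in is simply unhandled, and the proof is incomplete.

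The gap can be closed in two ways. One is to show that the bad case never occurs for torsion-free $G$: if no prime factor of $m$ can be peeled, one checks (by identifying invariant functionals with functionals on the image of the averaging projection $\frac{1}{m}\sum_j\tau^j$) that $v_0$ must be a norm, $v_0=\prod_{j=0}^{m-1}\tau^j(w)$ for some $w\in\Z^n$; but then $(w^{-\ell}t)^m=\bigl(\prod_{j=0}^{m-1}\tau^j(w)\bigr)^{-\ell}t^m=v_0^{-\ell}v_0^{\ell}=e$, so $w^{-\ell}t$ is a nontrivial torsion element, a contradiction. This rescues your scheme, but the argument is absent from your proposal. The other is the paper's route, which avoids the arithmetic entirely by never factoring the original $\pi$ through $\Z$: take any surjection $\varphi\colon G\to\Z$ (it exists because $t^m\in Z(G)$ makes the transfer nontrivial) and apply Lemma~\ref{lem:fin_ext} to the extension $1\to G'\to G\to\Z/m\Z\to 0$ defined by $\bar{\varphi}=q\circ\varphi$, which factors through $\Z$ by construction. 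Its kernel is $G'\cong H\times\Z$ with $H=\ker\varphi$ torsion free of abelian rank $n-1$ and cyclic holonomy, so induction on the rank alone finishes the proof; there Lemma~\ref{lem:gen_lift} enters only to produce an element $g$ with $\varphi(g)=1$ and $\pi(g)$ a generator of $\Z/m\Z$, which is what makes $g^m$ central and $G'$ a direct product.
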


\begin{proof}
This will be proven by induction over the rank of the free abelian subgroup. The case $n=1$ follows from Lemma~\ref{lem:ind_start}.	

Observe that $Z(G) \neq \{e\}$. Indeed, let $x \in G$ be a lift of the generator of $\Z/m\Z$. Then $G$ is generated by $\Z^n$ and $x$. Moreover, $x^m \neq e$ since $G$ is torsion free and $\pi(x^m)$ is trivial, hence $x^m \in \Z^n$. Thus, $x^m$ commutes with $\Z^n$ and $x$, hence with all elements of $G$, i.e.\ $x^m \in Z(G)$.

This implies that the transfer homomorphism $T \colon G \to \Z^n$ associated to the finite index subgroup $\Z^n$ is non-trivial. Therefore there exists a surjective group homomorphism $\varphi \colon G \to \Z$. Let $q \colon \Z \to \Z/m\Z$ be the canonical quotient homomorphism and let $\bar{\varphi} = q \circ \varphi$. Let $H = \ker(\varphi)$. We have the following commutative diagram with exact rows and columns:
\[
\xymatrix{
	& & 0 \ar[d] & 0 \ar[d] \\
	& & H \ar[r] \ar[d] & \Z/a\Z \ar[r] \ar[d] & 0 \\
 	0 \ar[r] & \Z^n \ar[d] \ar[r] & G \ar[r]^-{\pi} \ar[d]^{\varphi} & \Z/m\Z \ar[r] \ar[d] & 0 \\
 	0 \ar[r] & b\Z \ar[r] & \Z \ar[r]^-{\pi'} \ar[d] & \Z/b\Z \ar[r] \ar[d] & 0 \\
	& & 0 & 0
}
\]
The value of $a$ is chosen in such a way that $\text{Im}(\left.\pi\right|_H) \cong \Z/a\Z$ and $b$ satisfies $m = ab$. The homomorphism $\pi'$ is surjective since $\pi$ and $\Z/m\Z \to \Z/b\Z$ are. The vertical arrow on the left hand side is induced by $\left.\varphi\right|_{\Z^n}$.

Suppose $H \subset \ker(\pi) = \Z^n$. Then $a = 1$, $b =m$ and $\pi = \pi' \circ \varphi$. By Lemma~\ref{lem:fin_ext}, $G$ is then connective. So we may assume $a > 1$.

We claim that there is an element $g \in G$ such that $\varphi(g) = 1$ and $\pi(g)$ is a generator of $\Z/m\Z$. This is constructed as follows: If $b = 1$, we can choose $g \in G$, such that $\varphi(g) = 1$ and modify it by an element in $H$ to achieve that $\pi(g)$ becomes a generator. Otherwise, choose $g' \in G$ such that $\varphi(g') = 1$ and note that $\pi'(\varphi(g'))$ is a generator of $\Z/b\Z$ by surjectivity. We can lift $\pi'(\varphi(g'))$ to a generator $x \in \Z/m\Z$ by Lemma~\ref{lem:gen_lift}. Note that $\pi(g') - x \in \Z/a\Z$ and lift this difference to an element $h \in H$. Let $g = g'\,h^{-1}$. Then $\varphi(g) = \varphi(g') = 1$ and $\pi(g) = \pi(g') - \pi(h) = \pi(g') - \pi(g') + x = x$.

Let $G' = \ker(\bar{\varphi}) = \{g \in G \ |\ \varphi(g) = \ell \cdot m \text{ for } \ell \in \Z\} \supset H$. Hence, the following diagram has exact rows.
\begin{equation} \label{eqn:decomposition}
\xymatrix{
	1 \ar[r] & G' \ar[d]_-{\left.\varphi\right|_{G'}} \ar[r] & G \ar[r]^-{\bar{\varphi}} \ar[d]^{\varphi} & \Z/m\Z \ar[r] \ar[d]_{=} & 0 \\
	1 \ar[r] & m\Z \ar[r] & \Z \ar[r]_-{q} & \Z/m\Z \ar[r] & 0 \\	
}
\end{equation}
The group $G$ is generated by $\Z^n$ and the element $g$ constructed above. We have $g^m \in \Z^n \cap Z(G)$ and $\varphi(g^m) = m$. In particular, $g^m \in Z(G')$. Let
\[
	\psi \colon H \times \Z \to G' \quad ; \quad (h,k) \mapsto h\cdot g^{mk}\ .
\]
This is a group homomorphism, since $g^m$ is central and it fits into the commutative diagram with exact upper and lower part
\[
\xymatrix{
	& & G'\ar[dr]^{\left.\varphi\right|_{G'}/m} \\
	1 \ar[r] & H \ar[dr]_-{i} \ar[ur] &  & \Z \ar[r] & 0\\
	& & H \times \Z \ar[ur]_-{\text{pr}} \ar[uu]_-{\psi}
}
\]
proving that $\psi$ is in fact an isomorphism. By the upper row in diagram (\ref{eqn:decomposition}) and Lemma~\ref{lem:fin_ext}, the connectivity of $G$ follows if $H \times \Z$, hence $H$, is connective \cite[Thm.~4.1]{Dad-Pennig-homotopy-symm}. But $H$ fits into a short exact sequence of the form
\[
	0 \to A \to H \to \Z/a\Z \to 0
\]
where $A$ is the free abelian kernel of the nonzero homomorphism $\Z^n \to b\Z$ from above, which has rank $(n-1)$. This completes the induction step.
\end{proof}
\section{Connectivity of Lie group $C^*$-algebras}
In this section we determine which linear connected nilpotent Lie groups  and which linear connected reductive Lie groups have connective reduced $C^*$-algebras. Let us recall that nilpotent connected Lie groups are liminary as shown by Dixmier \cite{Dixmier-nilpotent} and Kirillov \cite{Kirillov-nilpotent} and semisimple connected Lie groups are liminary as shown by Harish-Chandra \cite{Harish-Chandra-CCR}.
\subsection{Solvable and nilpotent Lie groups}

A locally compact group $N$ is compactly generated if $N=\bigcup_n V^n$ for some compact subset $V$ of $N$. Every connected locally compact group is automatically compactly generated.
The structure of abelian compactly generated locally compact groups is known. If $N$ is such a group, then
$N\cong \R^n \times \Z^m \times K$ for integers $n,m\geq 0$ and $K$ a compact group, \cite[Thm.\ 4.4.2]{Deitmar-Echterhoff}.
\begin{proposition}\label{prop:center}
If $G$ is a second countable locally compact amenable group (for example a solvable Lie group) whose  center contains a noncompact closed  connected  subgroup,
then $C^*(G)$ is connective.
\end{proposition}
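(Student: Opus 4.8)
The plan is to realize $C^*(G)$ as a continuous field of nuclear $C^*$-algebras over a base that has no nonempty compact open subset, and then apply Corollary~\ref{cor:fields}. Two preliminary reductions make this clean. First, since $G$ is second countable and amenable, $C^*(G)=C^*_r(G)$ is separable and nuclear, and every quotient of it is again separable and nuclear; so once a field structure is in place, the hypotheses on fibers in Corollary~\ref{cor:fields} will be automatic. Second, the whole problem reduces to producing a \emph{central} closed copy of $\R$ inside $G$, because its Pontryagin dual $\widehat{\R}\cong\R$ is exactly a base with no compact open subsets.

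To extract such a subgroup, let $L\le Z(G)$ be the given noncompact closed connected subgroup. Being closed in $G$, it is itself a second countable locally compact abelian group, and being connected it is compactly generated. Hence the structure theorem for compactly generated locally compact abelian groups \cite[Thm.~4.4.2]{Deitmar-Echterhoff} gives $L\cong\R^n\times\Z^m\times K$ with $K$ compact; connectedness forces $m=0$ and $K$ connected, while noncompactness forces $n\ge 1$. Therefore $L$ contains the closed subgroup $Z=\R\times\{0\}^{n-1}\times\{e_K\}\cong\R$, which is closed in $G$ and central.

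Centrality of $Z$ makes $C^*(Z)\cong C_0(\widehat{Z})=C_0(\R)$ a subalgebra of the center of the multiplier algebra $M(C^*(G))$, so that $C^*(G)$ becomes a $C_0(\R)$-algebra. Its fiber over a character $\chi\in\widehat{Z}$ is the twisted group $C^*$-algebra $C^*(G/Z,\omega_\chi)$: fixing a Borel cocycle $c\colon G/Z\times G/Z\to Z$ that represents the extension $Z\to G\to G/Z$, the multiplier is $\omega_\chi(\bar g,\bar h)=\chi\bigl(c(\bar g,\bar h)\bigr)$. Since $\chi\mapsto\chi(c(\bar g,\bar h))$ is continuous, the multipliers $\omega_\chi$ vary continuously with $\chi$, and I claim the resulting field $\chi\mapsto C^*(G/Z,\omega_\chi)$ over $\widehat{Z}\cong\R$ is a continuous field. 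Granting this, the fibers are quotients of the nuclear algebra $C^*(G)$ and hence nuclear, the field is separable, and $\R$ (being connected and non-discrete) has no nonempty compact open subset; Corollary~\ref{cor:fields} then delivers connectivity of $C^*(G)$.

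The main obstacle is precisely the continuity of this field, i.e.\ the lower semicontinuity of $\chi\mapsto\lVert a(\chi)\rVert$ for $a\in C^*(G)$, since upper semicontinuity is automatic for any $C_0(\R)$-algebra. I would attack it through the continuous dependence of the twisted group algebras on the multiplier: for a fixed second countable group $Q=G/Z$ and a parameter space $X$, a family of $2$-cocycles $\{\omega_\chi\}_{\chi\in X}$ depending continuously on $\chi$ assembles into a continuous field $\chi\mapsto C^*(Q,\omega_\chi)$, which one verifies on the dense subalgebra coming from $C_c(G)$ by checking that for such elements $\chi\mapsto\lVert a(\chi)\rVert$ is continuous. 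An alternative route is to show that the central-character map $\mathrm{Prim}(C^*(G))\to\widehat{Z}$ is continuous and open and invoke Fell's characterization of continuous $C_0(X)$-algebras; here continuity of the map is immediate from continuity of central characters along convergent nets of representations, and openness is the delicate point. Either way, establishing continuity (rather than mere upper semicontinuity) is the crux; the remaining steps are routine once the field is known to be continuous.
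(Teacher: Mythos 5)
Your overall route is exactly the paper's: extract a closed central copy of $\R$ from the given subgroup via the structure theorem for compactly generated locally compact abelian groups, regard $C^*(G)$ as a $C_0(\widehat{\R})$-algebra whose fibers are the twisted group algebras $C^*(G/Z,\omega_\chi)$, and apply Corollary~\ref{cor:fields}. The problem is that your writeup stops at the one step that carries all of the technical content. You write ``I claim the resulting field $\chi\mapsto C^*(G/Z,\omega_\chi)$ over $\widehat{Z}\cong\R$ is a continuous field,'' and you concede at the end that lower semicontinuity of $\chi\mapsto\lVert a(\chi)\rVert$ ``is the crux,'' offering only two unexecuted sketches. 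As it stands, this is a reduction of the Proposition to an unproved continuity assertion, not a proof.

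Moreover, the general principle behind your first sketch is false as stated: for an arbitrary second countable group $Q$, a continuous family of $2$-cocycles does \emph{not} in general assemble into a continuous field of full twisted group algebras; only upper semicontinuity comes for free. Amenability is precisely the hypothesis that restores lower semicontinuity, and that---not nuclearity of the fibers---is its essential role in this Proposition (your text uses amenability only to get nuclearity and $C^*=C^*_r$). A concrete counterexample: $G=Sp(4,\R)\ltimes H_5$, with $H_5$ the five-dimensional real Heisenberg group, has property $\mathrm{(T)}$ and its center contains $Z(H_5)\cong\R$, so it satisfies every hypothesis of Proposition~\ref{prop:center} except amenability. Its full group $C^*$-algebra contains the nonzero Kazhdan projection $p$ \cite{Valette:full}, and in the $C_0(\widehat{\R})$-decomposition the norm function $\chi\mapsto\lVert p(\chi)\rVert$ is the indicator function of the trivial character: upper semicontinuous and vanishing at infinity, but not lower semicontinuous. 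Equivalently, the section algebra of a genuine continuous field over $\R$ can contain no nonzero projection, since its norm function would be a continuous $\{0,1\}$-valued function on a connected space tending to $0$ at infinity; so for this $G$ the field cannot be continuous. The paper closes exactly the gap you left open by quoting the amenable case of the continuity statement: since $G$ is amenable, \cite[Thm.~1.2]{Packer-Raeburn}, as explained in \cite[Lemma~6.3]{Echterhoff-Williams}, shows that $C^*(G)$ is a continuous field over $\widehat{\R}\cong\R$. Replacing your claim by that citation (or by a proof of it, which genuinely uses amenability) completes your argument and makes it coincide with the paper's.
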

\begin{proof}
Let $N$ be a noncompact closed  connected  subgroup of $Z(G)$. Then, by the structure theorem quoted above, $N$ must have a closed subgroup isomorphic to $\R$.
Consider the central extension:
\[0 \to \R\to G \to H \to 0. \]
   Since $G$ is amenable, by \cite[Thm. 1.2]{Packer-Raeburn} (as explained in \cite[Lemma 6.3]{Echterhoff-Williams}), $C^*(G)$ has the structure of a continuous field of $C^*$-algebras over $\widehat{\R}\cong \R$.
  The desired conclusion follows from Cor.~\ref{cor:fields} since $\R$ has no compact open subsets.
\end{proof}

\begin{example}
We give here two examples  that complement Proposition~\ref{prop:center}.
\begin{itemize}

\item[(i)] Simply connected solvable Lie groups can have discrete noncompact centers.
This is the case for $G=\C\rtimes_\alpha \R$ where $\alpha:\R \to \mathrm{Aut}(\C)$ is defined by $\alpha(t)(z)=e^{it}z$ for $t\in \R$ and $z\in \C$. In this case $Z(G)=\{0\}\times 2\pi \Z$.

Nevertheless in this case $C^*(G)$ is connective.
Consider the extension \[0\to Z(G)\to G \to G/Z(G)\cong \C \times \T\to 0.\]
Then $C^*(G)$ is a continuous $C(\T)$-algebra whose fiber at $1$ is the algebra $C^*(\C \times \T)$. Since
$C^*(\C \times \T)\cong C_0(\R^2)\otimes c_0(\Z)$ is connective, so is $C^*(G)$.

\item[(ii)] Both the real  and the complex ``$ax+b$'' groups
\[G=\left\{\begin{pmatrix} a & b \\ 0 & 1\end{pmatrix}\colon a\in F^{\times}, b\in F\right\}\]
where $F=\R$ or $F=\C$
 are solvable with trivial center and their $C^*$-algebras contain a copy of the compacts $\K$, see \cite{Rosenberg-solvable}, and so they are not connective.
\end{itemize}
\end{example}

\begin{theorem}\label{thm:nilpotent} Let $G$ be a (real or complex) linear connected nilpotent Lie group. Then $C^*(G)$ is connective
if and only if $G$ is not compact.
\end{theorem}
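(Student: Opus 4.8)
The plan is to treat the two directions separately, with the noncompact direction resting on the structure theory of linear nilpotent groups together with Proposition~\ref{prop:center}.

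For the easy implication I would argue by contraposition. A compact connected Lie group is nilpotent only if it is abelian, since its Lie algebra is reductive and a nilpotent semisimple summand must vanish; hence a compact connected nilpotent Lie group is a torus $\T^k$. Then $C^*(G)\cong c_0(\Z^k)$, whose primitive spectrum $\widehat{\T^k}\cong \Z^k$ is discrete and therefore has nonempty compact open subsets (single points). By Proposition~\ref{prop:compact-open}(i)---or simply because this algebra has nonzero projections---$C^*(G)$ is not connective. Thus connectivity forces noncompactness.

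For the converse I would first invoke the structure of linear connected nilpotent Lie groups. The key input is that, in the presence of a faithful finite-dimensional representation, the maximal compact central torus of $G$ splits off as a direct factor: $G\cong N\times \T^k$ with $N$ a simply connected nilpotent Lie group. I would locate and cite the precise statement; the nonlinear quotient $\He_3/\Z$, whose connected center is the compact torus $\R/\Z$, shows that this splitting genuinely uses linearity. Since $G$ is noncompact while $\T^k$ is compact, $N$ must be nontrivial, hence $N$ is diffeomorphic to $\R^{\dim N}$ with $\dim N\geq 1$. The center of the nilpotent Lie algebra $\mathfrak{n}$ is nonzero, and because $N$ is simply connected its center is $Z(N)\cong \R^a$ with $a\geq 1$. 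Consequently $Z(G)\cong \R^a\times \T^k$ contains a closed subgroup isomorphic to $\R$, i.e.\ a noncompact closed connected central subgroup. As $G$ is nilpotent, hence amenable and second countable, Proposition~\ref{prop:center} applies directly and yields that $C^*(G)$ is connective. (Alternatively, one can write $C^*(G)\cong C^*(N)\otimes c_0(\Z^k)$, deduce connectivity of $C^*(N)$ from Proposition~\ref{prop:center} applied to $N$, and then pass to the tensor product via Proposition~\ref{products}, $(i)\Rightarrow(iii)$.)

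I expect the main obstacle to be precisely the structure-theoretic step: pinning down the hypotheses under which linearity forces the central torus to split as a direct factor, equivalently forcing the identity component of the center to be noncompact whenever $G$ is noncompact. Everything downstream is a clean application of Proposition~\ref{prop:center}; the real content---and the only place where the word \emph{linear} enters---is ruling out phenomena like $\He_3/\Z$, where a compact central torus is entangled with the rest of the group.
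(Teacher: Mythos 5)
Your proof is correct and follows essentially the same route as the paper: both rely on the structure theorem that a linear connected nilpotent Lie group splits as $G \cong T \times N$ with $T$ a torus and $N$ simply connected nilpotent (the paper cites \cite[Chap.~2, Thm.~7.3]{Vinberg-ency}), note that noncompactness forces $Z(N) \cong \R^a$ with $a \geq 1$, and then apply Proposition~\ref{prop:center}. Your explicit handling of the compact direction (a compact connected nilpotent group is a torus, whose $C^*$-algebra has projections) and your tensor-product alternative are fine additions, but the core argument is the paper's.
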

\begin{proof} We view $G$ as a real Lie group.
By \cite[Chap.~2, Thm.~7.3]{Vinberg-ency}, if $G$ is a linear connected nilpotent Lie group, then $G$   decomposes as a direct product $G=T\times N$ of a torus $T$ and a simply connected nilpotent group $N$.
If $G$ is compact, then $G=T$ and $C^*(G)$ is isomorphic to a direct sum of $\C$ so that it is not connective.
If $G$ is noncompact, then $N$ is nontrivial and so the center of $G$ is given by $Z(G)=T\times Z(N)$,
where the center $Z(N)$ of $N$ is isomorphic to $\R^n$ for some $n\geq 1$.
We conclude the proof by applying Proposition~\ref{prop:center}.
\end{proof}

\begin{remark} It is not true that  a liminary (CCR) $C^*$-algebra  is connective if and only if does not have nonzero projections. Indeed
\[A=\left\{f\in C([0,1], M_2(\C))\colon f(0)=\begin{pmatrix}\lambda & 0\\ 0 & \lambda \end{pmatrix},\, f(1)=\begin{pmatrix}\lambda & 0\\ 0 & 0\end{pmatrix},\, \lambda\in\C\right\} \]
does not contain nonzero projections but is not connective since $\mathrm{Prim}(A)$ is homeomorphic
to a circle $S^1$ and hence it is compact (and open in itself).

\end{remark}
\subsection{Reductive Lie groups}

A linear connected reductive group $G$ is a closed group of real or complex matrices that is closed under conjugate transpose. In other words $G$ is a closed and selfadjoint subgroup of the general linear group over
either $\R$ or $\C$.
 A linear connected semisimple  group is a linear connected reductive group with finite center \cite{Knapp-book}.

Say $G\subset GL(n,\R)$ or $GL(n,\C)$.
Define
$K=G\cap O(n)$, or $K=G\cap U(n)$ in the complex case.
If $G$ is linear connected reductive, then $K$ is compact, connected and is a maximal compact subgroup of $G$
\cite[Prop.1.2]{Knapp-book}.

Let $G=KAN$ be the  Iwasawa decomposition of the  linear connected semisimple Lie group $G$. $A$ is abelian and $N$ is nilpotent and both are closed simply connected subgroups of $G$, \cite[Thm. 5.12]{Knapp-book}.

First we consider the case of complex Lie groups.

\begin{theorem} \label{thm:complexLie} If $G$ is a linear connected  complex semisimple Lie group, then
 $C^*_r(G)$ is connective if and only if $G$ is not compact.
\end{theorem}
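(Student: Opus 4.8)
The plan is to reduce the statement to the topological criterion of Proposition~\ref{prop:compact-open}, feeding into it the Harish-Chandra structure theory of the tempered dual of a complex semisimple Lie group. First I would dispose of the easy direction: a connected compact complex Lie group is necessarily a complex torus, so a compact complex semisimple $G$ must be trivial, in which case $C^*_r(G)=\C$ contains a nonzero projection and hence is not connective (connective algebras have no nonzero projections). Thus for the \emph{only if} direction there is essentially nothing to prove beyond this remark, and the real content of the theorem is that a noncompact $G$ is connective.

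For the main direction, recall that by Harish-Chandra $G$ is liminary, hence type~I, so $C^*_r(G)$ is separable and nuclear and its spectrum is the tempered dual, which I identify with $\Prim{C^*_r(G)}$. The key representation-theoretic input, which I would cite rather than reprove, is the description of this tempered dual via the Iwasawa decomposition $G=KAN$: since $G$ is complex semisimple it has a single conjugacy class of Cartan subgroups $H=TA$, and this is noncompact because noncompactness of $G$ forces $A=\exp(\mathfrak a)\neq\{e\}$. Consequently $G$ has no compact Cartan subgroup and therefore no discrete series, so every tempered irreducible representation is a unitary principal series $\pi_{\mu,\nu}$ induced from the Borel, parametrized by $\mu\in\widehat{T}$ and $\nu\in\mathfrak a^\ast\cong\R^r$, where $r\geq 1$ is the (complex, equivalently real) rank. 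The distinctively complex feature is that all these unitary principal series are irreducible, with $\pi_{\mu,\nu}\cong\pi_{\mu',\nu'}$ only through the Weyl group $W$. This realizes the tempered dual homeomorphically as the disjoint union
\[
  \Prim{C^*_r(G)}\;\cong\;\bigsqcup_{[\mu]\in \widehat{T}/W}\ \mathfrak a^\ast/W_\mu,
\]
where $W_\mu\subset W$ is the finite stabilizer of $\mu$.

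With this description the topological verification is straightforward. Each space $\mathfrak a^\ast/W_\mu$ is a quotient of $\R^r$ by a finite linear group, hence Hausdorff, locally compact, second countable (so metrizable), connected, and noncompact for $r\geq 1$; a connected noncompact space has no nonempty compact open subset. Since the summands indexed by distinct $[\mu]$ are clopen, any compact open subset of the disjoint union would restrict to a compact open subset of each $\mathfrak a^\ast/W_\mu$ and must therefore be empty. Thus $\Prim{C^*_r(G)}$ is Hausdorff with no nonempty compact open subset, and Proposition~\ref{prop:compact-open}(ii) yields that $C^*_r(G)$ is connective. (Equivalently, one may view each summand as a continuous field of nuclear algebras over $\mathfrak a^\ast/W_\mu$, invoke Corollary~\ref{cor:fields} directly, and use that a countable direct sum of connective algebras is connective.)

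The main obstacle is the second paragraph: pinning down the topology of the tempered dual. Everything hinges on the two complex-group facts, namely absence of discrete series (from the noncompact Cartan) and irreducibility of all unitary principal series, since these are exactly what guarantee Hausdorffness of $\Prim{C^*_r(G)}$ and the clean product-of-Euclidean-cones picture. If either failed, as for real groups where reducibility of principal series at special parameters produces non-Hausdorff points, Proposition~\ref{prop:compact-open}(ii) could not be applied and one would need a finer analysis through Theorem~\ref{thm:extensions}. I would therefore spend the most care on citing and assembling the precise Plancherel description of the tempered dual in the complex semisimple case.
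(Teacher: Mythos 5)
Your proof is correct and rests on the same two pillars as the paper's: the structure of the tempered dual of a complex semisimple group and the topological criterion of Proposition~\ref{prop:compact-open}(ii). The execution differs, though, in where that criterion gets applied. The paper never determines $\Prim{C^*_r(G)}$: it quotes Penington--Plymen \cite{Penington-Plymen} only for the embedding $C^*_r(G)\subset C_0(\widehat{M}\times\widehat{A},\K)$, with $M$ the centralizer of $A$ in $K$, applies the criterion to this ambient algebra (whose primitive spectrum $\widehat{M}\times\R^n$ is obviously Hausdorff with no nonempty compact open subsets), and concludes because connectivity passes to $C^*$-subalgebras. You apply the criterion to $C^*_r(G)$ itself, which obliges you to establish the homeomorphism $\Prim{C^*_r(G)}\cong(\widehat{T}\times\mathfrak{a}^*)/W$; as you yourself flag, that is the expensive step, since Hausdorffness of the spectrum hinges on irreducibility of \emph{all} unitary principal series (Zhelobenko, Wallach) together with an identification of the Fell topology, i.e.\ essentially the full Plancherel-theoretic picture. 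So the paper's subalgebra trick buys a cheaper citation and makes the Hausdorffness question for $\Prim{C^*_r(G)}$ moot, while your route buys a sharper conclusion: an explicit description of the spectrum as a disjoint union of clopen pieces $\mathfrak{a}^*/W_\mu$, each a connected, noncompact, finite quotient of $\R^r$, from which the absence of nonempty compact open subsets is transparent --- and your point-set argument there is sound. In the compact direction the paper simply invokes Peter--Weyl ($C^*_r(G)$ is a direct sum of matrix algebras, hence has projections), which is more robust than your reduction to the fact that a compact connected complex semisimple group is trivial; the latter is also correct for complex-analytic groups, but the Peter--Weyl argument works under any reading of the hypotheses.
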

\begin{proof} If $G$ is compact, $C^*_r(G)$  is isomorphic to a direct sum of matrix algebras and hence it is not connective as it contains nonzero projections.

Conversely, suppose now that $G$ is a non-compact linear connected semi\-simple complex Lie group.
Note that from the Cartan decomposition $G=KAK$ \cite[Thm. 5.20]{Knapp-book} it follows that since $G$ is non-compact,  so is $A$ and therefore $A\cong \R^n$, for some $n\geq 1$.

 Let $M$ be the centralizer of $A$ in $K$.
By Lemma 3.3 and Proposition 4.1 of \cite{Penington-Plymen},
it follows that
\[C^*_r(G)\subset C_0(\widehat{M}\times \widehat{A},\mathcal{K}).\]
Since $\widehat{M}\times \widehat{A}\cong \widehat{M}\times \R^n$ does not have nonempty compact open subsets, it follows from Proposition~\ref{prop:compact-open}(ii) that $C_0(\widehat{M}\times \widehat{A},\mathcal{K})$
is connective. This completes the proof since connectivity passes to $C^*$-subalgebras.
\end{proof}
Next we consider the case of linear connected  real reductive Lie groups.
An element $g\in G$ is semisimple if it can be diagonalized over $\mathbb{C}$ when viewed as a matrix
$g\in M_n(\mathbb{C})$.

A closed subgroup $H$ of $G$ is called a Cartan subgroup if it is a maximal abelian subgroup consisting of semisimple elements, \cite[p.67]{Herb}.
If $G$ is either compact or a complex Lie group, then all Cartan subgroups of $G$ are connected and they are conjugated inside $G$.
In the general case $G$ has finitely many Cartan subgroups up to conjugacy and Cartan subgroups can have finitely many connected components.


We denote by $\widehat{G}_d\subset \widehat{G}$ the discrete series representations.
It consists of unitary equivalence classes of square-integrable representations \[\sigma:G \to U(H_\sigma).\]
 Harish-Chandra has shown that the discrete series representations of a semi-simple Lie group $G$
 are parametrized  by compact Cartan subgroups and in particular $G$ has discrete series representations
 if and only if it has a compact Cartan subgroup, \cite{Harish-Chandra-discrete1, Harish-Chandra-discrete2}.

We recall the following facts from \cite[p.72]{Herb} concerning cuspidal parabolic subgroups.
 Let $H$ be a Cartan subgroup of $G$. Then $H$ decomposes as a direct product
$H=TA=T\times A$,
where $T$ is an abelian compact group and $A$ is a vector group isomorphic to $\R^n$ for $n\geq 0$.
The case $n=0$ occurs when $H$ is a compact Cartan subgroup.
The centralizer of $A$ in $G$ denoted by
\[L=C_G(A)=\{g\in G\colon ga=ag,\, \forall a \in A\}\]
 is a Levi subgroup of $G$. This means that there is a parabolic subgroup of $G$ of the form $P=LN$ (not unique) with $L$ as Levi subgroup.
Since $A$ is central in $L$, $H$ is a relatively compact Cartan subgroup of $L$, i.e.
$H/Z(L)$ is compact. This implies that $L$ has discrete series representations. Such a parabolic subgroup $P=LN$ is called  cuspidal.

One can further decompose $L=MA$ to obtain a Langlands decomposition
\[P=MAN=MA \ltimes N,\]
with $N$ a unipotent group.
If $H$ is a compact Cartan subgroup, then $L=P=G$ by \cite[p.72]{Herb}.

 We will write $P=M_PA_P N_P$ whenever we want emphasize the components of $P$.

The description of $C^*_r(G)$ relies on the analysis of
  the unitary principal series representations of $G$ associated to parabolic cuspidal subgroups $P$ (also called the $P$-principal series). They are of the form
\[\mathrm{Ind}_P^G(\sigma \otimes \omega \otimes 1_N),\]
where $\sigma\in \widehat{M}_d$ and $\omega\in \widehat{A}$ and $1_N$ is the trivial representation of $N$.

Consider two pairs $(P_i,\sigma_i)$, $i=1,2$ consisting of cuspidal parabolic subgroups of $G$ and irreducible square-integrable unitary representations of the subgroups $M_i$, $i=1.2$. We say that the pairs are \emph{associated} if there is $g\in G$ such that $gP_1g^{-1}=P_2$ and $\sigma_1(g \cdot g^{-1})$ is unitarily equivalent to $\sigma_2$.
This is an equivalence relation \cite[Def.~5.2]{ClareCrispHigson}. We denote by $[P,\sigma]$ the equivalence class of the pair $(P,\sigma)$.

The following statement is based on the calculation of $C^*_r(G)$ by A.~Wassermann \cite{A-Wassemann} although we don't really use the full strength of his results. An expanded treatment of the structure of $C^*_r(G)$ appears in \cite{ClareCrispHigson}.

Let $J(G)=\bigcap_{\pi \in \widehat{G}_d} \mathrm{ker}(\pi)\subset C^*_r(G)$ be the common kernel of the discrete series representations.
The following theorem shows that the K-homology of  $C^*_r(G)$ can be described in terms of homotopy classes asymptotic morphisms $C^*_r(G)\to \K$ which factor through $J(G)$ and
discrete series representations.

\begin{theorem}\label{thm:real_reductive}
Let  $G$ be a linear connected real reductive Lie group. Then
$C^*_r(G)\cong J(G)\oplus \bigoplus_{\sigma\in \widehat{G}_d}K(H_\sigma)$ and $J(G)$ is a connective liminary $C^*$-algebra. Moreover,
the following assertions are equivalent:
\begin{itemize}
  \item[(i)] $C^*_r(G)$ is connective,
  \item[(ii)] $G$ does not have discrete series representations,
  \item[(iii)] $G$ does not have a compact Cartan subgroup,
  \item[(iv)] There are no nonzero projections in $C_r^*(G)$.
\end{itemize}
\end{theorem}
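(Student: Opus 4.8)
The plan is to establish the direct-sum decomposition of $C^*_r(G)$ first, then derive the equivalences from it by combining our obstruction results with the permanence properties of connectivity. The starting point is the Plancherel-type description of $C^*_r(G)$ attributed to Wassermann: the reduced $C^*$-algebra decomposes according to associated pairs $[P,\sigma]$, where $P$ ranges over cuspidal parabolic subgroups and $\sigma$ over discrete series of the Levi factor $M_P$. The summands indexed by proper parabolics $P = M_P A_P N_P$ with $A_P \neq \{e\}$ contribute pieces built from the $P$-principal series $\mathrm{Ind}_P^G(\sigma\otimes\omega\otimes 1_N)$ parametrized by $\omega \in \widehat{A}_P \cong \R^{n}$ with $n \geq 1$, while the summand indexed by $P=G$ itself (which occurs precisely when $G$ has a compact Cartan subgroup) contributes the discrete series block $\bigoplus_{\sigma\in\widehat{G}_d} \K(H_\sigma)$. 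I would collect all the former into $J(G)$ and identify it as the common kernel of the discrete series representations, so that $C^*_r(G) \cong J(G) \oplus \bigoplus_{\sigma\in\widehat{G}_d}\K(H_\sigma)$ as an algebraic direct sum (the block of compacts splits off because discrete series are isolated points of $\widehat{G}$ with Hausdorff-separated kernels).

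Next I would prove that $J(G)$ is connective. Each associated-pair summand over a proper cuspidal parabolic $P$ has spectrum fibering over $\widehat{A}_P \cong \R^{n_P}$ with $n_P \geq 1$, so each such summand is a nuclear (indeed liminary) separable continuous field of $C^*$-algebras over a locally compact space possessing no nonempty compact open subset. By Corollary~\ref{cor:fields} each such summand is connective. Since a (finite, up to conjugacy) direct sum of connective $C^*$-algebras is connective, $J(G)$ is connective; this is where I expect the main technical work to lie, namely in verifying that the spectrum of $J(G)$ genuinely carries a free $\R^{n}$-parameter with no compact open subsets, which rests on the structure theory quoted from \cite{Herb} that $A_P$ is a nontrivial vector group for every proper cuspidal parabolic.

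Finally I would close the loop on the four equivalences. The implication $(ii)\Leftrightarrow(iii)$ is exactly Harish-Chandra's theorem that $G$ has discrete series if and only if it has a compact Cartan subgroup. For $(iii)\Rightarrow(i)$: if $G$ has no compact Cartan subgroup then $\widehat{G}_d = \varnothing$, so $C^*_r(G) = J(G)$, which is connective by the previous step. The implication $(i)\Rightarrow(iv)$ is immediate since connective $C^*$-algebras have no nonzero projections. It remains to show $(iv)\Rightarrow(ii)$, or contrapositively that the existence of a discrete series representation produces a nonzero projection: if $\widehat{G}_d \neq \varnothing$, then $C^*_r(G)$ has a direct summand $\K(H_\sigma)$, and any rank-one projection there is a nonzero projection in $C^*_r(G)$. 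This closes the cycle $(i)\Rightarrow(iv)\Rightarrow(ii)\Rightarrow(iii)\Rightarrow(i)$, with $(ii)\Leftrightarrow(iii)$ supplied by Harish-Chandra, completing the proof.
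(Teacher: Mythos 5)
Your proposal follows the same overall architecture as the paper's proof: Wassermann's description of $C^*_r(G)$, the splitting off of the discrete series block with $J(G)$ as the common kernel of the discrete series, Harish-Chandra's theorem for $(ii)\Leftrightarrow(iii)$, and the same elementary implications linking (i), (ii) and (iv). The one place where you genuinely diverge is the proof that $J(G)$ is connective, and there your route demands strictly more input. The paper uses only an embedding $J(G) \hookrightarrow \bigoplus_{[P,\sigma]} C_0(\widehat{A}_P,\K(H_\sigma))$; since each $\widehat{A}_P \cong \R^{n_P}$ with $n_P \geq 1$, the right-hand side is connective (Cor.~\ref{cor:fields}, or Prop.~\ref{prop:compact-open}(ii)), and connectivity passes to $C^*$-subalgebras --- this is precisely why the paper remarks that it does not need the full strength of Wassermann's results. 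You instead identify $J(G)$ \emph{as} the direct sum of the associated-pair blocks and apply Cor.~\ref{cor:fields} block by block, which requires the Plancherel-level theorem itself, and two details then need repair. First, the blocks are fixed-point algebras of $C_0(\widehat{A}_P,\K(H_\sigma))$ under finite Weyl/R-group actions, so they are continuous fields over the quotients $\widehat{A}_P/W_\sigma$ rather than over $\widehat{A}_P$; this is harmless, since the quotient of $\R^{n_P}$ ($n_P\geq 1$) by a finite linear action still has no nonempty compact open subsets, but it must be checked. Second, the index set $\{[P,\sigma]\}$ is countably infinite --- only the association classes of parabolics are finite in number, while $\sigma$ runs through the discrete series of $M_P$ --- so your parenthetical ``finite'' is wrong, and you need connectivity to be preserved under $c_0$-direct sums (which it is, by the permanence properties of \cite{Dad-Pennig-homotopy-symm}, a $c_0$-sum being an inductive limit of finite direct sums). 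With these two repairs your argument is correct; the paper's subalgebra trick simply buys the same conclusion from weaker input.
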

\begin{proof}
As explained in \cite[p.560]{A-Wassemann}, \cite[p.1306]{ClareCrispHigson} the reduced $C^*$-algebra of a linear reductive connected Lie group admits an embedding
\[C^*_r(G)\hookrightarrow \bigoplus_{[P,\sigma]} C_0(\widehat{A}_P, \K(H_\sigma)), \]
where the  direct sum is over equivalence classes $[P,\sigma]$ as above.
It is important to emphasize that
if $G$ has a compact Cartan subgroup, then $G$ itself is one of the cuspidal parabolic subgroups and we have:
\[C^*_r(G)\hookrightarrow  \bigoplus_{[P,\sigma]} C_0(\widehat{A}_P, \K(H_\sigma))\oplus \bigoplus_{\sigma\in \widehat{G}_d} \K(H_\sigma), \]
where the first direct sum involves proper cuspidal parabolic subgroups $P=M_PA_P N_P$ and hence $\mathrm{dim}(\widehat{A}_P)>0$.
Moreover  by \cite{A-Wassemann}, \cite{ClareCrispHigson}:
\begin{equation}\label{eqn:decomp} C^*_r(G)\cong J(G)\oplus \bigoplus_{\sigma\in \widehat{G}_d}\K(H_\sigma)\end{equation}
 where
\[
	J(G) \hookrightarrow \bigoplus_{[P,\sigma]} C_0(\widehat{A}_P, \K(H_\sigma))\ .
\]
Hence, $J(G)$ is connective being a subalgebra of a connective $C^*$-algebra. The first part of the statement follows now from the decomposition \eqref{eqn:decomp}.

The equivalence $(ii) \Leftrightarrow (iii)$ is Harish-Chandra's result mentioned earlier.
In view of the decomposition \eqref{eqn:decomp}, $(ii)$ implies that  $C^*_r(G)=J(G)$ and hence $(i)$ since $J(G)$ is always connective. Connective $C^*$-algebras do not contain nonzero projections and hence $(i) \Rightarrow (iv)$.
Finally by using \eqref{eqn:decomp} again, we see that $(iv) \Rightarrow (ii)$ since $\K(H_\sigma)$ contains nonzero projections if $H_\sigma\neq 0$.
\end{proof}
\subsection{A remark on full $C^*$-algebras of Lie groups}

The full $C^*$-algebra $C^*(G)$ of a property $\mathrm{(T)}$ Lie group $G$ contains  nonzero projections and hence it is not connective, see \cite{Valette:full}. Nevertheless, inspection of several classes of examples indicates that  $C^*(G)$ has  interesting
connective  ideals that arise naturally from the representation theory of $G$. We postpone a detailed discussion of what is known for another time, but would like to mention two examples.

If $G$ is a connected semisimple Lie group with finite center, then $C^*(G)$ is liminary (or CCR), see \cite[p.115]{Wolf:book_harmonic}.

\begin{proposition} (a) $C^*(SL_2(\C))$ is connective.

(b) $C^*(SL_3(\C))=I(SL_3(\C))\oplus \C$ and $I(SL_3(\C))$ is connective.
\end{proposition}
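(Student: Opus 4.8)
The plan is to compare the full and reduced $C^*$-algebras through the regular representation and then invoke the extension theorem. Both $SL_2(\C)$ and $SL_3(\C)$ are noncompact linear connected complex semisimple Lie groups with finite center, so their full group $C^*$-algebras are liminary, hence type~I, nuclear and separable. By Theorem~\ref{thm:complexLie} the reduced algebra $C^*_r(G)$ is connective in both cases. Let $\lambda\colon C^*(G)\to C^*_r(G)$ be the regular representation and $J=\ker\lambda$, giving an exact sequence $0\to J\to C^*(G)\to C^*_r(G)\to 0$ of separable nuclear $C^*$-algebras. Since $C^*_r(G)$ is connective, Theorem~\ref{thm:extensions} reduces both statements to showing that the ideal $J$ accounting for the \emph{non-tempered} part of $\widehat{G}$ is connective. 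Note that, being complex, neither group has a compact Cartan subgroup, so neither has discrete series; the tempered dual is purely continuous and the non-tempered dual consists of the complementary series together with the trivial representation $\iota$.

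For part (a), the non-tempered dual of $SL_2(\C)$ is the spherical complementary series $\{\gamma_s : s\in(0,1)\}$ together with $\iota$, where $\gamma_s\to\iota$ as $s\to 1$ while $\gamma_s$ converges to a tempered representation as $s\to 0$ (so those limits leave $\widehat{J}$). Since $J$ is type~I, $\Prim{J}\cong\widehat{J}\cong(0,1]$, a locally compact Hausdorff space with no nonempty compact open subset. As $J$ is nuclear, Proposition~\ref{prop:compact-open}(ii) shows $J$ is connective, and then $C^*(SL_2(\C))$ is connective by Theorem~\ref{thm:extensions}. It is essential here that $SL_2(\C)$ has real rank one and hence no property~$\mathrm{(T)}$: the point $\iota$ is \emph{not} isolated, so it cannot split off as a $\C$ summand, in agreement with the absence of nonzero projections.

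For part (b), $SL_3(\C)$ has property~$\mathrm{(T)}$, so $\iota$ is isolated in $\widehat{G}$; being one-dimensional it is also closed, hence clopen, which yields the splitting $C^*(SL_3(\C))=\C\oplus I(SL_3(\C))$ and proves the first assertion. Because $\iota$ is not tempered, $\lambda$ annihilates the $\C$ summand and restricts to a surjection $I(SL_3(\C))\to C^*_r(SL_3(\C))$ with kernel $J'$, the ideal of the \emph{nontrivial} non-tempered representations. By Theorem~\ref{thm:extensions} it remains to prove $J'$ connective. Here I would use the standard stratification of the type~I spectrum $\widehat{G}$ by associated classes $[P,\sigma]$ of cuspidal data: each complementary-series stratum is a continuous field of elementary $C^*$-algebras $\K(H_\sigma)$ over a locally closed piece of $\widehat{A_P}/W_P$, which is an open region of positive dimension (as $P\neq G$ and there are no discrete series) and therefore has no nonempty compact open subset. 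Filtering $J'$ by the closures of these strata makes each subquotient such a continuous field, connective by Corollary~\ref{cor:fields}; repeated application of Theorem~\ref{thm:extensions} then gives that $J'$, and hence $I(SL_3(\C))$, is connective.

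The main obstacle is pinning down the topology of the non-tempered spectrum precisely enough, especially for $SL_3(\C)$. One must invoke the classification of the unitary dual of complex semisimple groups (Duflo, Zhelobenko) to enumerate the complementary-series families, to confirm that $\iota$ is the \emph{only} isolated point of $\widehat{G}$ (an isolated nontrivial point would produce a compact open singleton and obstruct connectivity of $J'$), and to verify that every remaining stratum is noncompact. The subtlety motivating the filtration route is that $\Prim{I(SL_3(\C))}$ is itself non-Hausdorff—complementary series degenerate to tempered representations—so Proposition~\ref{prop:compact-open}(ii) cannot be applied directly to $I(SL_3(\C))$; peeling off the reduced part and then the individual strata is exactly what circumvents this.
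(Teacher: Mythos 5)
Your part (a) is correct but takes a genuinely different route from the paper. The paper quotes Fell's explicit presentation of $C^*(SL_2(\C))$ as a subalgebra of $C_0(Z,\K(H))$ for a space $Z$ with no nonempty compact open subsets, and concludes by the permanence of connectivity under passing to subalgebras; you instead peel off $C^*_r(SL_2(\C))$ (connective by Theorem~\ref{thm:complexLie}) and apply Theorem~\ref{thm:extensions} to $J=\ker\lambda$, identifying $\Prim{J}$ with $(0,1]$ and invoking Proposition~\ref{prop:compact-open}(ii). Both arguments rest on the same input, namely Fell's determination of the topology of $\widehat{SL_2(\C)}$ \cite{Fell}: your claim that the relative topology on the non-tempered part is the \emph{Hausdorff} space $(0,1]$ is true but not free, since in $\widehat{G}$ the complementary series has \emph{two} limits at its upper endpoint ($\iota$ and a principal series representation $\pi_{2,0}$), and one must check that removing the tempered part leaves a Hausdorff spectrum. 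Also, the aside ``real rank one and hence no property $\mathrm{(T)}$'' is not a valid implication ($Sp(n,1)$ has real rank one and property $\mathrm{(T)}$), though the conclusion does hold for $SL_2(\C)$.

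In part (b) there is a genuine gap. The splitting $C^*(SL_3(\C))=\C\oplus I(SL_3(\C))$ via property $\mathrm{(T)}$ agrees with the paper, and the reduction to proving connectivity of $J'=\ker\bigl(\lambda|_{I(G)}\bigr)$ is sound; but that connectivity is precisely the hard point, and your stratification argument is only a sketch. The classes $[P,\sigma]$ with unitary characters of $A_P$ parametrize the \emph{tempered} dual, so the complementary-series strata of $\widehat{J'}$, their parameter spaces, their closures, and the claim that the resulting filtration of $J'$ has continuous-field subquotients over spaces with no compact open subsets all require the explicit unitary dual of $SL_3(\C)$ and its Fell topology --- which you acknowledge as ``the main obstacle'' but do not supply. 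Degenerations of complementary series into tempered representations (and multiple limits, as already occur for $SL_2(\C)$) make it non-obvious that such a filtration exists. The paper bypasses all of this with Pierrot's theorem \cite{Pierrot}: your $J'$ is exactly $\ker(\lambda_G\oplus\iota_G)$, which Pierrot proved to be a \emph{contractible} $C^*$-algebra, hence connective by homotopy invariance of connectivity; a single application of Theorem~\ref{thm:extensions} to $0\to J'\to I(G)\to C^*_r(G)\to 0$ then finishes the proof. Without Pierrot's result, or an equivalent analysis of the non-tempered dual, your argument for (b) is incomplete.
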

\begin{proof} (a) $C^*(SL_2(\C))$ was computed by Fell \cite[Thm.\ 5.4]{Fell}. We describe now his result.
Let $Z$ be the subspace of $\R^2$ defined by  $Z=\bigcup_{n=0}^\infty \{n\}\times L_n$
where $L_0=(-1,\infty)$ and $L_n=(-\infty,\infty)$ for all $n\geq 1$. Endow $Z$ with the induced topology from $\R^2$.
Let $H_0$ be a separable infinite dimensional Hilbert space, let $H=H_0\oplus \C$ and fix a unitary operator $V:H_0\to H$.
Then $C^*(SL_2(\C))$ is isomorphic to
\[
 \{F\in C_0(Z,\K(H))\colon  F(0,-1)=V^*F(2,0)V\oplus \lambda, \,\text{for some} \,\, \lambda \in \C\}
\]
Since $Z$ has no nonempty open compact subsets it follows that $C_0(Z,\K(H))$ is connective and therefore so is its subalgebra $C^*(SL_2(\C))$.

(b)  This will be obtained as a consequence of the following result on the structure of $C^*(SL_3(\C))$ obtained by Pierrot \cite{Pierrot}.
Let $G=SL_3(\C)$ and denote by $\lambda_G:C^*(G)\to C_r^*(G)$ the morphism induced by the left regular representation and by $\iota_G:C^*(G)\to \C$ the trivial representation.
Pierrot proved that the kernel $J$ of the morphism $\lambda_G\oplus \iota_G :C^*(G)\to C_r^*(G) \oplus \C$
is a contractible $C^*$-algebra. The representation $\iota_G$ is isolated since $G$ has property $\mathrm{(T)}$.
Therefore there is an exact sequence
\[0 \to J \to I(G) \to C_r^*(G) \to 0\]
where $J$ is contractible and $C_r^*(G)$ is connective by Theorem~\ref{thm:complexLie}.
We conclude that $I(G)$ is connective by applying Theorem~\ref{thm:extensions}.

\end{proof}

\textbf{Acknowledgements}
We would like to thank Andrzej Szczepa\'{n}ski for pointing out a glitch in the formulation of Definition \ref{def:shielded}.
%
%
\bibliographystyle{abbrvnat}

\end{document}